\newtheorem{theorem}{Theorem}
\newtheorem{conjecture}[theorem]{Conjecture}
\newtheorem{corollary}[theorem]{Corollary}
\newtheorem{definition}[theorem]{Definition}
\newtheorem{lemma}[theorem]{Lemma}
\newtheorem{proposition}[theorem]{Proposition}
\newtheorem{remark}[theorem]{Remark}
\newenvironment{proof}[1][Proof]{\noindent\textbf{#1.} }{\ \rule{0.5em}{0.5em}}
\begin{document}
\title{Self attracting diffusions on a sphere and application to a periodic case\thanks{This work was supported by the Swiss
National Foundation Grant FN $200020\_ 149871/1.$}}
\author{Carl-Erik Gauthier \footnote{Institut de Math\'ematiques, Universit\'e de Neuch\^atel, Rue \'Emile Argand 11, 2000 Neuch\^atel, Switzerland. Email:
    carl-erik.gauthier@unine.ch}}
\maketitle
\begin{abstract}
This paper proves almost-sure convergence for the self-attracting diffusion on the unit sphere $$dX(t)=\sigma  dW_{t}(X(t))-a\int_{0}^{t}\nabla_{\mathbb{S}^n}V_{X_s}(X_t) dsdt,\qquad X(0)=x\in\mathbb{S}^n $$ 
where $\sigma >0$, $a < 0$, $V_y(x)=\langle x,y\rangle$ is the usual scalar product in $\mathbb{R}^n$, and $(W_{t}(.))_{t\geqslant 0}$ is a Brownian motion on $\mathbb{S}^n$. From this follows the almost-sure convergence of the real-valued self-attracting diffusion $$d\vartheta_{t}=\sigma dW_{t}+a\int_{0}^{t}\sin(\vartheta_{t}-\vartheta_{s})dsdt, $$ where $(W_t)_{t\geqslant 0}$ is a real Brownian motion. 
\end{abstract}

\textit{keywords:} reinforced process, self-interacting diffusions,
asymptotic pseudotrajectories, rate of convergence.

MSC 60K35, 60G17, 60J60

\section{Introduction}
In this paper, we are interested in the asymptotic behaviour of the solutions to the stochastic differential equation (SDE)
\begin{equation}\label{eqSphere}
dX(t)=\sigma  dW_{t}(X(t))-a\int_{0}^{t}\nabla_{\mathbb{S}^n}V_{X_s}(X_t) dsdt,\qquad X(0)=x\in\mathbb{S}^n
\end{equation} 
with $\sigma>0$, $a\in\mathbb{R}$, $(W_t(.))_t$ is a Brownian motion on the $n$-dimensional Euclidean unit sphere, $\nabla_{\mathbb{S}^n}$ is the gradient on $\mathbb{S}^n$ and $V_y(x)=\langle x,y\rangle$ where $\langle.,.\rangle$ stands for the canonical scalar product on $\mathbb{R}^{n+1}$.\\
The motivation for investigating (\ref{eqSphere}) lies in the study of the long time behaviour of the real-valued SDE:
\begin{equation}\label{eq}
d\vartheta_{t}=\sigma dW_{t}+a\int_{0}^{t}\sin(\vartheta_{t}-\vartheta_{s})dsdt,\qquad \vartheta_{0}=0,
\end{equation} 
 where $(W_{t})_{t}$ is a real Brownian motion. 

 Identify $\vartheta_{t}$ to $(\cos(\vartheta_{t}),\sin(\vartheta_{t}))\in \mathbb{S}^{1}$ and define $d(x,y)$ as the square of the euclidean distance between $(\cos (x),\sin (x)) $ and $(\cos (y),\sin (y)) $; so that $d(x,y)=2-2\cos(x-y)$. Deriving $d$ with respect to $x$ gives $\partial_{x}d(x,y)=2\sin(x-y)$.\\
 Therefore, depending on the sign of $a$, $a\sin (\vartheta_{t}-y)$ points forward/outward $(\cos (y),\sin (y)) $; that is $(\cos(\vartheta_{t}),\sin(\vartheta_{t}))$ is attracted by $(\cos (y),\sin (y)) $ if $a<0$ and repelled from $(\cos (y),\sin (y)) $ if $a>0$. Hence, intuitively, $(\cos(\vartheta_{t}),\sin(\vartheta_{t}))$ should turn around the circle if $a>0$ and it should converge to some point if $a<0$.  \\
Concerning the case $a>0$, it is proved in the preprint \cite{CEGMB} of the author in collaboration with M.Bena\"im that
\begin{theorem} \label{repulcercle} The law of $$(\cos(\vartheta_{t}),\sin(\vartheta_{t})) $$ converges to the uniform law on the circle. 
\end{theorem}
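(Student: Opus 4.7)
The plan is to study the normalized empirical occupation measure $\mu_t := t^{-1}\int_0^t\delta_{\vartheta_s}\,ds$ on the circle $\mathbb{S}^1$, prove almost-sure convergence $\mu_t\to\lambda$ to the uniform measure, and then transfer this to convergence in law of $(\cos\vartheta_t,\sin\vartheta_t)$. Writing the drift in (\ref{eq}) in terms of $\mu_t$ gives $a\int_0^t\sin(\vartheta_t-\vartheta_s)\,ds = at\,|\hat\mu_t(1)|\sin(\vartheta_t-\phi_t)$ with $\phi_t := \mathrm{Arg}\,\hat\mu_t(1)$ and $\hat\mu_t(k) := \int e^{ik\eta}\mu_t(d\eta)$. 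Convergence $\mu_t\to\lambda$ is equivalent to $\hat\mu_t(k)\to 0$ for every $k\neq 0$, and only $\hat\mu_t(1)$ enters the drift directly, so that mode must be handled first.

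The core argument I would carry out is an asymptotic pseudotrajectory analysis in the spirit of Bena\"im. Under the logarithmic time change $s=\log t$, $\tilde\mu_s := \mu_{e^s}$ obeys $\partial_s\tilde\mu_s = \delta_{\vartheta_{e^s}}-\tilde\mu_s$, and the goal is to show that $\tilde\mu$ shadows a deterministic semiflow on $\mathcal{P}(\mathbb{S}^1)$ for which $L(\mu) := |\hat\mu(1)|^2$ is a strict Lyapunov function with unique equilibrium $\lambda$. Heuristically, with $a>0$ the strong repulsive drift $at|\hat\mu_t(1)|\sin(\vartheta_t-\phi_t)$ pushes $\vartheta_t$ toward the antipode $\phi_t+\pi$ of $\phi_t$, so that the new mass $\delta_{\vartheta_t}$ being incorporated into $\mu_t$ carries $e^{i\vartheta_t}\approx -e^{i\phi_t}$; combined with the elementary identity $d\hat\mu_t(1) = t^{-1}(e^{i\vartheta_t}-\hat\mu_t(1))\,dt$, this produces an approximate ODE $\dot r\approx -(1+r)/t$ for $r=|\hat\mu_t(1)|$, which forces $|\hat\mu_t(1)|\to 0$. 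Once the first mode has been neutralized, the drift in (\ref{eq}) is to leading order switched off, $\vartheta_t$ behaves asymptotically like Brownian motion on $\mathbb{S}^1$, and the higher Fourier modes of $\mu_t$ relax to zero by a Birkhoff-type ergodic averaging argument.

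The step I expect to be the main obstacle is precisely the factor $t$ in the drift of (\ref{eq}), which places the fast variable $\vartheta_t$ in a vanishing-temperature regime and breaks the standard Bena\"im-Raimond framework where the fast variable equilibrates to a $\mu$-dependent Gibbs measure. Making the averaging rigorous requires a quantitative comparison of the relaxation time of $\vartheta_t$ in the well of the effective potential $-V_{\mu_t}(\theta) := -\int\cos(\theta-\eta)\,\mu_t(d\eta)$ with the slow scale $1/t$ on which $\mu_t$ itself varies, together with uniform-in-$\mu$ estimates that remain valid as $|\hat\mu(1)|$ becomes small and the well flattens. Once $\mu_t\to\lambda$ almost surely is secured, convergence in law of $(\cos\vartheta_t,\sin\vartheta_t)$ to $\lambda$ follows by a tightness-and-identification argument: any subsequential weak limit of the one-time marginal has vanishing effective drift in the limit SDE, must therefore be invariant under free Brownian motion on $\mathbb{S}^1$, and hence coincides with $\lambda$.
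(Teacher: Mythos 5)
Your proposal takes a stochastic-approximation route on the space of empirical occupation measures, in the spirit of Bena\"im--Ledoux--Raimond. The paper's cited proof (Theorem~5 of \cite{CEGMB}) is structurally different and considerably simpler: it builds on exactly the change of variables the present paper sets up in Section~1.1. Setting $U_t=\int_0^t X_s\,ds$ (\emph{not} normalized) turns the pair $(X_t,U_t)$ into a time-homogeneous Markov diffusion on $\mathbb{S}^n\times\mathbb{R}^{n+1}$, cf.\ (\ref{EqDecoupl1}). In the repulsive case $a>0$ this process admits an explicit invariant probability measure whose $X$-marginal is uniform on the sphere (and whose $U$-marginal is Gaussian-type), and convergence in law of $(X_t,U_t)$ to this invariant measure follows from hypoellipticity and irreducibility. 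Theorem~\ref{repulcercle} is then an immediate corollary. Note that $\hat\mu_t(1)=U_t/t$, so all information you try to extract from the occupation measure is already encoded in $U_t$; the Markov description with $U_t$ as the state variable is precisely what makes the averaging problem you flag disappear.

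Beyond the route being different, there is a genuine gap in your final step. You claim that once $\mu_t\to\lambda$, the drift in (\ref{eq}) is ``to leading order switched off'' and $\vartheta_t$ behaves asymptotically like free Brownian motion. This is false. The drift is $a\,t\,|\hat\mu_t(1)|\sin(\vartheta_t-\phi_t)=a\,|U_t|\sin(\vartheta_t-\phi_t)$, and in the repulsive regime $U_t$ is \emph{tight but non-degenerate}: it converges in law to a nontrivial (Gaussian-type) limit, so $|\hat\mu_t(1)|$ decays at exactly the rate $1/t$ that cancels the factor $t$. The effective drift therefore remains of order~$1$ for all time, and a subsequential limit of the one-time marginal of $\vartheta_t$ is invariant not for free Brownian motion but for a diffusion coupled to the fluctuating drift field governed by $U_t$. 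Your own heuristic already signals this: the approximate ODE $\dot r\approx-(1+r)/t$ solves to $r(t)=-1+C/t$, which becomes negative for $t>C$ and so cannot be correct for large $t$; the breakdown occurs precisely when $r=|\hat\mu_t(1)|$ reaches size $1/t$, i.e.\ when $|U_t|$ reaches order~$1$, which is where the analysis actually has to live. Fixing this requires treating $(X_t,U_t)$ jointly rather than isolating the marginal of $\vartheta_t$, which essentially brings you back to the Markovian argument of \cite{CEGMB}.
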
   

This paper intends to prove that the intuition concerning the attractive case (when $a<0$) is true. The first main result of this paper is the following result.
\begin{theorem}\label{MainResult1}
If $a<0$, there exists a random variable $X_{\infty}$ such that $\vert \vartheta_{t}-X_{\infty}\vert = O(t^{-1/2}\log^{\gamma /2} (t)) $, with $\gamma >1$.
\end{theorem}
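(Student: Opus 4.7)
The plan is to reformulate (\ref{eq}) in a more tractable form using the ``empirical barycentre.'' Set
$$Z_t := \int_0^t \bigl(\cos\vartheta_s,\sin\vartheta_s\bigr)\,ds,\qquad (tR_t,\phi_t) \text{ the polar coordinates of } Z_t,$$
so that $M_t := tR_t\cos\phi_t$ and $N_t := tR_t\sin\phi_t$ are the two components of $Z_t$. Expanding $\sin(\vartheta_t-\vartheta_s) = \sin\vartheta_t\cos\vartheta_s-\cos\vartheta_t\sin\vartheta_s$ inside the drift integral, (\ref{eq}) rewrites as
$$d\vartheta_t = \sigma\,dW_t + a\,t R_t\,\sin(\vartheta_t-\phi_t)\,dt.$$
Because $a<0$, the drift is attractive toward the slowly moving direction $\phi_t$ with a confinement rate $|a| t R_t$ that grows linearly in time, provided $R_t$ stays bounded away from zero. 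This suggests a three step strategy: control of the confinement strength $R_t$, Gaussian control of the gap $\vartheta_t-\phi_t$, and summability of the velocity of $\phi_t$.

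\textbf{Step 1 (concentration, $R_t\to 1$).} A direct computation gives $\frac{d}{dt}(tR_t) = \cos(\vartheta_t-\phi_t)$. Hence whenever $\vartheta_t-\phi_t$ is small, $tR_t$ grows like $t$, so $R_t$ stays close to $1$. I would make this rigorous by a bootstrap: introduce $H_t := 1 - R_t^2$, derive its It\^o decomposition, and show that its drift is dissipative (of order $-|a| H_t$ modulo lower order terms from the gap $\vartheta_t-\phi_t$) while its martingale part has quadratic variation of order $t^{-2}$. This fits exactly into the asymptotic pseudotrajectory framework alluded to in the keywords, the underlying deterministic semiflow having the set $\{R=1\}$, i.e.\ the Dirac masses, as its stable equilibria; alternatively one can use a direct Lyapunov / stochastic approximation argument to deduce $R_t\to 1$ almost surely.

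\textbf{Step 2 (Gaussian control of the gap).} Once $R_t\geqslant 1/2$ after some random time $T_0$, linearising $\sin$ around $0$ identifies $\vartheta_t-\phi_t$ with an Ornstein--Uhlenbeck type process with mean reversion rate $|a| t R_t\sim |a| t$ and noise $\sigma$, driven additionally by the slow motion of $\phi_t$. The corresponding invariant scale is $\sigma/\sqrt{2|a| t}$, and a Gaussian maximal inequality over dyadic blocks $[T,2T]$ combined with Borel--Cantelli yields, for every $\gamma>1$,
$$\sup_{s\geqslant t}|\vartheta_s-\phi_s|= O\!\bigl(t^{-1/2}\log^{\gamma/2}(t)\bigr)\qquad \text{a.s.}$$

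\textbf{Step 3 (convergence of $\phi_t$).} A short calculation from $\dot Z_t=(\cos\vartheta_t,\sin\vartheta_t)$ and the polar representation of $Z_t$ gives $\dot\phi_t = \sin(\vartheta_t-\phi_t)/(tR_t)$, hence $|\dot\phi_t|\leqslant |\vartheta_t-\phi_t|/(tR_t)$. Plugging in the estimate of Step~2 and using $R_t\to 1$, one obtains $|\dot\phi_t|=O(t^{-3/2}\log^{\gamma/2}(t))$, which is integrable at infinity. Therefore $\phi_t$ converges almost surely to some random limit $X_\infty$, and $|\phi_t-X_\infty|\leqslant \int_t^\infty|\dot\phi_s|\,ds = O(t^{-1/2}\log^{\gamma/2}(t))$. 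A triangle inequality with Step~2 yields the announced rate for $|\vartheta_t-X_\infty|$.

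The delicate point, and what I expect to be the main obstacle, is the circularity between Steps 1 and 2: the linearisation used to bound $\vartheta_t-\phi_t$ requires $R_t$ to be close to $1$, while the proof that $R_t\to 1$ relies in turn on the gap $\vartheta_t-\phi_t$ being small on average. Closing this loop requires a careful self-consistent estimate, either via a stopping time/bootstrap argument that freezes $R_t$ in a good region once it enters, or by treating $(H_t,\vartheta_t-\phi_t)$ jointly within the asymptotic pseudotrajectory machinery adapted to measure-valued dynamics on $\mathbb{S}^1$.
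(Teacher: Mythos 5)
Your reformulation coincides with the paper's, specialised to $n=1$: your $Z_t$ is $U(t)=\int_0^t X(s)\,ds$, your $tR_t$ is $\Vert U(t)\Vert$, your $\phi_t$ is the angle of $V_t=U(t)/\Vert U(t)\Vert$, and the gap $\vartheta_t-\phi_t$ corresponds to $\arccos\Theta_t$ where $\Theta_t=\langle V_t,X(t)\rangle$. Your Step~3 is exactly Lemma~\ref{Vcvge}, and Step~2 is the same idea the paper implements via a time change together with the pseudotrajectory rate estimate (\ref{vit}) and a comparison with a sine SDE. The genuine gap is in Step~1, and you correctly diagnose it as a circularity, but your proposed fix does not work as stated. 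First, $Z_t$ is a pathwise time integral with no stochastic-integral part, so $H_t=1-R_t^2$ has \emph{no} martingale term; there is no small-noise coefficient to feed into the pseudotrajectory criterion for $H_t$, and the dissipativity of its drift is itself conditional on the gap being small, which is precisely what you do not yet know. Second, the logical order must be reversed: the paper proves $\Theta_t\to1$ first and only then deduces $R_t/t\to1$ by Ces\`aro from $dR_t=\Theta_t\,dt$.

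What breaks the loop is Lemma~\ref{rCVGE}, and it is the one ingredient missing from your proposal. From the It\^o expansion (\ref{UX}) of $\langle U(t),X(t)\rangle$ combined with Lemma~\ref{normEvol} one reads off the exact exponential-martingale identity
\[
\exp\!\bigl(-2\bigl(\langle X(t),U(t)\rangle+\tfrac{1}{4}R_t^2\bigr)\bigr)=e^{-2t}\,\mathcal{E}_M(t),
\]
where $\mathcal{E}_M$ is a genuine martingale by Novikov's criterion; taking expectations and applying Markov plus Borel--Cantelli yields the \emph{unconditional} a priori bound $\liminf_{t}R_t/\sqrt t\geqslant 1$ a.s., with no information about the gap required. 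With that in hand, the stopping-time bootstrap you anticipate is exactly what the paper then runs (the times $\sigma_j,\tau_j$ tracking when $R_{\alpha(t)}/\sqrt{\alpha(t)}$ crosses $3/4$ and $1/2$, plus a comparison theorem) to obtain $\Theta_t\to1$, after which your Steps~2 and~3 close the proof. In short: right quantities, right architecture, but the exponential-martingale lower bound on $R_t$ is the missing key that makes the bootstrap non-circular.
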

In 1995, M.Cranston and Y. Le Jan proved a convergence result in \cite{cranston} in the cases where $a\sin(x)$ is replaced by $ax$ (linear case) or $a\times sgn(x)$, with $a<0$ (constant case); this last case being extended in all dimension by O.Raimond in \cite{R2} in 1997. A few years later, S.Herrmann and B.Roynette weakened the condition of the profile function $f$ around $0$ and were still able to get almost sure convergence (see \cite{HerrRoy}) for the solution to the stochastic differential equation
\begin{equation}
d\vartheta_{t}=\sigma dW_{t}+\int_{0}^{t}f(\vartheta_{t}-\vartheta_{s})dsdt.
\end{equation} 
Rate of convergence were given in \cite{HS} by S.Herrmann and M.Scheutzow. For the linear case, the optimal rate is proven and turns out to be $O(t^{-1/2}\sqrt{t})$.
 
However, a common fundamental property of these three papers lies in the fact that the associated profile function $f$ is monotone. 

We point out that self interacting diffusion involving periodicity has already received some attention in 2002 by M.Bena\"im, M.Ledoux and O.Raimond (\cite{BLR}), \textit{but} in the normalized case; that is, when $\int_{0}^{t}\sin(X_{t}-X_{s})ds$ is replaced by $\frac{1}{t}\int_{0}^{t}\sin(X_{t}-X_{s})ds$. The interpretation is therefore different. While the drift term of (\ref{eq}) can be ``seen'' as a summation over $[0,t]$ of the interaction between the current position $X_{t}$ and its position at time $s$ and thus an \textit{accumulation of the interacting force }, their drift is then the \textit{average of the interacting force}. The asymptotic behaviour is then given by the following Theorem. 
\begin{theorem}(Theorem 1.1, \cite{BLR}, Bena\"im, Ledoux, Raimond) \label{thmBLR} Let $(\vartheta_{t})_{t\geqslant 0}$ be a solution to the SDE $$d\vartheta_{t}=dW_{t}+\frac{c}{t}\int_{0}^{t}\sin (\vartheta_{t}-\vartheta_{s})dsdt, $$ with initial condition $\vartheta_{0}=0$ and $c\in\mathbb{R}$. Set $X_{t}=\vartheta_{t} \text{ mod }2\pi \in \mathbb{S}^{1}=\mathbb{R}/2\pi \mathbb{Z}$ and defined the normalized occupation measure
\begin{equation*} \mu_{t}=\frac{1}{t}\int_{0}^{t}\delta_{X_{s}}ds. \end{equation*}
\begin{enumerate}
\item If $c\geqslant -1$, then $\{\mu_{t}\}$ converges almost surely (for the topology of weak* convergence) toward the normalized Lebesgue measure on $\mathbb{S}^{1}\sim [0,2\pi]$, $\lambda(dx)=\frac{dx}{2\pi}$.
\item If $c<-1$, then there exists a constant $\beta (c)$ and a random variable $\varsigma\in [0,2\pi[$ such that $\{\mu_{t}\}$ converges almost surely toward the measure
$$\mu_{c,\varsigma}(dx)=\frac{\exp(\beta (c)\cos(x-\varsigma))}{\int_{\mathbb{S}^{1}}\exp(\beta (c) \cos (y))\lambda(dy)}\lambda (dx). $$
\end{enumerate}
\end{theorem}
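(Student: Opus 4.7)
The plan is to recast Theorem \ref{thmBLR} as a problem about the asymptotic behaviour of the normalized empirical measure $\mu_t$ itself, and to analyse it through the stochastic-approximation / asymptotic-pseudotrajectory framework for self-interacting diffusions. Observe first that the drift can be rewritten as $-c\int_{\mathbb{S}^1}\nabla_x\cos(\vartheta_t-y)\mu_t(dy)$, i.e.\ $-c\nabla_x V_{\mu_t}(\vartheta_t)$ with $V_\mu(x)=-\int\cos(x-y)\mu(dy)$. For any fixed probability measure $\mu$ on $\mathbb{S}^1$, the frozen diffusion with drift $-c\nabla V_\mu$ is an ergodic Langevin diffusion whose unique invariant measure is the Gibbs measure
\[
\Pi(\mu)(dx)=Z(\mu)^{-1}\exp\!\Bigl(2c\!\int_{\mathbb{S}^1}\cos(x-y)\,\mu(dy)\Bigr)\lambda(dx).
\]

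After the logarithmic time change $s=\log t$, I would show (following the Bena\"im--Ledoux--Raimond methodology) that $(\mu_{e^s})_{s\geqslant 0}$ is an asymptotic pseudotrajectory, in the sense of Bena\"im, of the semiflow on $\mathcal{P}(\mathbb{S}^1)$ generated by the deterministic ODE
\[
\dot\mu=-\mu+\Pi(\mu).
\]
By the theory of asymptotic pseudotrajectories, the limit set of $\mu_t$ is then a.s.\ contained in the chain-recurrent set of this semiflow, so everything reduces to a finite-dimensional analysis of $\Pi$.

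The next step is to classify the fixed points $\mu=\Pi(\mu)$. By rotational invariance, one looks for densities of the form $\mu_{\beta,\varsigma}(dx)\propto e^{\beta\cos(x-\varsigma)}\lambda(dx)$; plugging this ansatz into the self-consistency equation reduces it to the scalar equation $\beta=2c\,I_1(\beta)/I_0(\beta)$ in modified Bessel functions. Since $I_1/I_0$ is odd, strictly increasing, with derivative $1/2$ at $0$ and bounded by $1$, this equation admits only $\beta=0$ when $c\geqslant -1$ and, when $c<-1$, a unique nonzero solution $\beta(c)$ (up to sign) that produces the circle of fixed points $\{\mu_{c,\varsigma}:\varsigma\in[0,2\pi)\}$. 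Linearising $\Pi$ at Lebesgue and diagonalising in the Fourier basis shows that Lebesgue is asymptotically stable exactly when $c\geqslant -1$, so one obtains a pitchfork-type bifurcation at $c=-1$.

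To upgrade convergence of limit sets to a.s.\ convergence, I would introduce the free-energy functional $F(\mu)=H(\mu\,|\,\lambda)-c\int\!\!\int\cos(x-y)\mu(dx)\mu(dy)$ and verify that it is a strict Lyapunov function for the ODE: its gradient flow structure forces $\omega$-limits to lie in the fixed-point set. In the supercritical regime $c<-1$ one still has the circle of minimisers, so convergence to a single random $\varsigma$ requires an additional argument, typically a non-degeneracy of the Hessian transverse to the orbit combined with a stochastic-approximation convergence theorem on a neighbourhood of the attractor. The main obstacle, as usual in this circle of ideas, is this last point: establishing a.s.\ selection of a single limit $\varsigma$ among a continuous family of equivalent equilibria, and controlling the tightness of $\mu_t$ so that the asymptotic-pseudotrajectory theorem applies uniformly; the remaining ingredients (identification of $\Pi$, bifurcation analysis, Lyapunov function) are essentially computational.
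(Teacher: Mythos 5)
This statement is not proved in the paper: Theorem \ref{thmBLR} is quoted verbatim from Bena\"im--Ledoux--Raimond \cite{BLR} (and, as the paper itself remarks, it is a special case of Theorem 4.5 there). It appears here only as background motivation for the unnormalised problem the paper actually studies, so there is no ``paper's own proof'' to compare your proposal against.

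On its own merits, your sketch does reproduce the architecture of the actual argument in \cite{BLR}: rewrite the drift through the occupation measure, perform a logarithmic time change, show $(\mu_{e^s})_s$ is an asymptotic pseudotrajectory of the flow $\dot\mu=-\mu+\Pi(\mu)$ on $\mathcal{P}(\mathbb{S}^1)$, introduce the free-energy Lyapunov function, and reduce the fixed-point problem to a scalar Bessel-function equation. One concrete error: the frozen drift $c\int\sin(\cdot-y)\mu(dy)$ equals $-U'$ with $U(x)=c\int\cos(x-y)\mu(dy)$, so the invariant measure is
\begin{equation*}
\Pi(\mu)(dx)\propto\exp\!\Bigl(-2c\int_{\mathbb{S}^1}\cos(x-y)\,\mu(dy)\Bigr)\lambda(dx),
\end{equation*}
with a minus sign you omitted; the self-consistency equation is therefore $\beta=-2c\,I_1(\beta)/I_0(\beta)$. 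With the sign you wrote, $\beta=2c\,I_1(\beta)/I_0(\beta)$ has no nonzero solution when $c<0$, and the bifurcation would occur at $c=+1$ rather than $c=-1$, contradicting the statement you are trying to prove; your subsequent conclusion implicitly uses the corrected equation. You also correctly identify the genuinely delicate step, namely a.s.\ selection of a single phase $\varsigma$ from the attracting circle of equilibria when $c<-1$, which in \cite{BLR} is resolved by a finer stochastic-approximation argument exploiting the normal hyperbolicity of that circle.
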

An intermediate framework between those considered in Theorem \ref{MainResult1} and Theorem \ref{thmBLR} is to add a time-dependent weight $g(t)$ to the normalized case that increases to infinity when time increases, but ``not too fast''. In that case, O.Raimond proved the following Theorem
\begin{theorem}(Theorem 3.1, \cite{Raimond}, Raimond)\label{thmRai} Let $(\vartheta_{t})_{t\geqslant 0}$ be the solution to the SDE
\begin{equation} d\vartheta_{t}=dW_{t}-\frac{g(t)}{t}\int_{0}^{t}\sin (\vartheta_{t}-\vartheta_{s})dsdt,
\end{equation}
where $g$ is an increasing function such that $\lim_{t\rightarrow}g(t)=\infty$, there exists positives $c,t_0$ such that for $t\geqslant t_0$, $ g(t)\leqslant a\log(t)$ and $\vert g'(t)\vert =O(t^{-\gamma}),$ with $\gamma\in ]0,1]$. Set $X_t=\vartheta_t \text{ mod }2\pi$.

Then, there exists a random variable $X_{\infty}$ in $\mathbb{S}^{1}$ such that almost surely, $\mu_{t}=\frac{1}{t}\int_{0}^{t}\delta_{X_{s}}ds$ converges weakly towards $\delta_{X_{\infty}}$.
\end{theorem}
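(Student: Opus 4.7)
The strategy is to reduce Theorem \ref{MainResult1} to the sphere problem. A direct It\^o check shows that if $\vartheta_t$ solves (\ref{eq}), then $X_t:=(\cos\vartheta_t,\sin\vartheta_t)\in\mathbb{S}^1$ solves (\ref{eqSphere}) for $n=1$: indeed, the sphere gradient $\nabla_{\mathbb{S}^1}V_{X_s}(X_t)=X_s-\langle X_t,X_s\rangle X_t$ projected onto the unit tangent $(-\sin\vartheta_t,\cos\vartheta_t)$ equals $-\sin(\vartheta_t-\vartheta_s)$, recovering the drift in (\ref{eq}). Hence it suffices to prove the $\mathbb{S}^n$-level version (the paper's main technical theorem, which is advertised in the abstract and is intended to be established prior to deducing Theorem \ref{MainResult1}), specialised to $n=1$: almost sure convergence $X_t\to X_\infty\in\mathbb{S}^1$ at rate $O(t^{-1/2}\log^{\gamma/2}(t))$.

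For the sphere-level claim my plan is to introduce the memory vector $R_t:=\int_0^t X_s\,ds\in\mathbb{R}^{n+1}$ and, once $|R_t|>0$, its direction $U_t:=R_t/|R_t|$. With $a<0$ the drift $-a(R_t-\langle X_t,R_t\rangle X_t)$ equals $|a|$ times the tangential projection of $R_t$ onto $T_{X_t}\mathbb{S}^n$, i.e.\ a restoring force pointing toward $U_t$ with magnitude $|a||R_t|\sin\phi_t$, where $\cos\phi_t:=\langle X_t,U_t\rangle$. Setting $\Phi_t:=1-\langle X_t,U_t\rangle$, It\^o's formula applied to $\Phi_t$ yields, modulo lower-order curvature and $dU_t$-corrections, an inequality of the form
\begin{equation*}
d\Phi_t \;\leqslant\; \bigl(C\sigma^2 - 2|a|\,|R_t|\,\Phi_t\bigr)\,dt + dM_t,
\end{equation*}
for a local martingale $M_t$ whose quadratic variation grows linearly in $t$. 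Coupling this with a bootstrap that yields $|R_t|/t\to 1$ almost surely (which follows from $\Phi_t\to 0$ once a preliminary divergence $|R_t|\to\infty$ is obtained, e.g.\ by a comparison argument for a one-dimensional SDE satisfied by $|R_t|$), and then feeding $|R_t|\sim t$ back into the $\Phi_t$-dynamics, one obtains via a Gronwall/supermartingale argument and a Burkholder--Davis--Gundy bound on $M_t$ that $\Phi_t=O(\log^\gamma(t)/t)$, hence $|X_t-U_t|=O(t^{-1/2}\log^{\gamma/2}(t))$. A separate control on $dU_t=|R_t|^{-1}(X_t-\langle X_t,U_t\rangle U_t)\,dt$ shows $U_t$ is Cauchy with the same rate, so $X_t\to X_\infty$ a.s.\ with the claimed rate. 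This is essentially the asymptotic-pseudotrajectories toolkit of Bena\"{\i}m-style self-interacting diffusions.

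Finally, one lifts the sphere convergence back to $\vartheta_t\in\mathbb{R}$: the rate $|X_t-X_\infty|=O(t^{-1/2}\log^{\gamma/2}(t))$ forces $X_t$ to eventually remain in an arc of $\mathbb{S}^1$ of length less than $\pi$ around $X_\infty$, and on such an arc the lift is a bi-Lipschitz diffeomorphism, so $\vartheta_t$ is trapped near some random $X_\infty+2\pi k(\omega)$ with the same rate. The \emph{main obstacle} is the coupled bootstrap in the sphere step: the angular contraction of $\Phi_t$ requires the restoring coefficient $|a||R_t|$ to already be large, while the linear growth of $|R_t|$ uses that $X_t$ is close to $U_t$. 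Uncoupling these two controls --- typically via a preliminary, weaker estimate $|R_t|\to\infty$ at some unspecified rate, then upgrading iteratively --- together with handling the non-tangential Brownian cross-terms that produce the logarithmic factor $\log^{\gamma}(t)$ with $\gamma>1$ (through an iterated-logarithm / BDG bound), is where the bulk of the technical work lies.
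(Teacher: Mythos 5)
You have proved the wrong theorem. The statement you were given is Theorem \ref{thmRai}, which is \emph{Raimond's} Theorem 3.1 from \cite{Raimond}; the paper under review merely cites it for context and gives no proof of it. Your proposal instead sketches a proof of Theorem \ref{MainResult1} (equivalently Theorem \ref{MainResult2}), which is this paper's own main result about the un-normalized attractive diffusion on $\mathbb{S}^n$.

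The two results are genuinely different in a way that matters for the argument. In Theorem \ref{thmRai} the drift is
$-\frac{g(t)}{t}\int_0^t \sin(\vartheta_t-\vartheta_s)\,ds$
with $g(t)\leqslant a\log t$, so the interaction is first normalized by $1/t$ (an \emph{average} over the past) and then reweighted by a slowly growing factor $g(t)$; the effective restoring coefficient grows at most logarithmically and one must follow the occupation measure $\mu_t$, which is why the conclusion is stated as weak convergence $\mu_t\Rightarrow\delta_{X_\infty}$ rather than a pathwise rate. In Theorem \ref{MainResult1}/\ref{MainResult2} the drift is $a\int_0^t\sin(\vartheta_t-\vartheta_s)\,ds$ (no $1/t$), the accumulated interaction grows linearly ($R_t\sim t$), and one obtains a pathwise rate $O(t^{-1/2}\log^{\gamma/2}t)$. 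Your bootstrap $|R_t|/t\to 1$ and the subsequent contraction estimate on $\Phi_t=1-\langle X_t,U_t\rangle$ rest precisely on this linear growth and do not transfer to the $g(t)/t$ setting, where $|R_t|/t$ in your sense would not tend to $1$. Raimond's proof in \cite{Raimond} uses a different machinery adapted to the slowly-varying gain $g(t)$, in the spirit of simulated annealing and the normalized self-interacting diffusion theory of \cite{BLR}.

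For what it is worth, the sketch you do give is, after a change of notation (your $R_t,U_t,\Phi_t$ are the paper's $U(t),V_t,1-\Theta_t$), essentially the approach the paper takes for Theorem \ref{MainResult2}: pass to the pair $(R_t,\Theta_t)$, establish $\liminf R_t/\sqrt{t}\geqslant 1$ and then $R_t/t\to 1$, treat $\Theta_t$ as an asymptotic pseudotrajectory of $\dot x=\frac12(1-x^2)$ after a time change, and integrate $dV_t$ to get convergence of the direction with the advertised rate. But that is not a proof of Theorem \ref{thmRai}, and the paper does not ask for one.
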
 
Interpreting $\vartheta_t$ as an angle also provide the link between  (\ref{eqSphere}) and (\ref{eq}). Indeed, for $x,y\in\mathbb{S}^{n}$, one has
\begin{equation} \Vert x-y\Vert^2 =2-2\langle x,y\rangle=2-2\cos(D(x,y)),
\end{equation}
where $D(.,.)$ is the geodesic distance on $\mathbb{S}^{n}$. Therefore, Theorem \ref{MainResult1} follows from the more general Theorem
\begin{theorem}\label{MainResult2}
If $a<0$, there exists a random variable $X_{\infty}\in \mathbb{S}^n$ such that $$\Vert X(t)-X_{\infty}\Vert = O(t^{-1/2}\log^{\gamma /2} (t)), $$ with $\gamma >1$ and $\Vert . \Vert$ is the standard Euclidean norm in $\mathbb{R}^{n+1}$.
\end{theorem}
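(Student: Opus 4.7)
The strategy is to reformulate the cumulative drift via the centroid $Y_t := \int_0^t X_s\,ds$ and to exploit the attractive sign. Since $\nabla_{\mathbb{S}^n}V_y(x) = y - \langle x,y\rangle x$, the drift at time $t$ equals $|a|(Y_t - \langle X_t,Y_t\rangle X_t)$: it is $|a|$ times the orthogonal projection of $Y_t$ onto $T_{X_t}\mathbb{S}^n$, pulling $X_t$ toward the direction of $Y_t$. Heuristically one expects $Y_t \sim c\,t\,X_\infty$ for some random $c>0$ and $X_\infty \in \mathbb{S}^n$, so the effective attractive force grows linearly in $t$ and eventually overwhelms the noise.

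To make this rigorous, introduce $r_t := \|Y_t\|$, $h_t := \langle X_t, Y_t\rangle$, and the key residual
\[
U_t := r_t^2 - h_t^2 = \|Y_t - h_t X_t\|^2,
\]
the squared norm of the tangential component of $Y_t$ at $X_t$. Using $dY_t = X_t\,dt$, $\nabla_{\mathbb{S}^n}\langle\cdot,y\rangle(x) = y - \langle x,y\rangle x$ and $\Delta_{\mathbb{S}^n}\langle\cdot,y\rangle(x) = -n\langle x,y\rangle$, Itô's formula on the sphere yields
\[
dr_t^2 = 2 h_t\,dt, \qquad dh_t = dM_t^{(1)} - \tfrac{\sigma^2 n}{2} h_t\,dt + |a| U_t\,dt + dt,
\]
\[
dU_t = dM_t^{(2)} - 2|a| h_t U_t\,dt + \sigma^2 n h_t^2\,dt - \sigma^2 U_t\,dt,
\]
where $M_t^{(1)}, M_t^{(2)}$ are local martingales whose quadratic variations are proportional to $U_t\,dt$ and $h_t^2 U_t\,dt$ respectively. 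The essential ingredient is the contractive term $-2|a| h_t U_t$, supplied by the attractive sign $a<0$.

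The core of the proof is a bootstrap argument establishing simultaneously that $r_t/t$ stays bounded below (so $Y_t$ grows linearly and the attraction is genuinely strong) and that $U_t = O(h_t)$. The balance $-2|a|h_t U_t + \sigma^2 n h_t^2 \approx 0$ in the drift of $U_t$ predicts the quasi-stationary value $U_t \approx \sigma^2 n h_t/(2|a|)$, whence $U_t/r_t^2 = O(1/t)$. Controlling the martingale $M_t^{(2)}$ via Dambis--Dubins--Schwarz and the iterated logarithm sharpens this to $U_t/r_t^2 = O(t^{-1}\log^\gamma t)$ for any $\gamma>1$. Since $\|X_t - \pi_t\|^2 = 2(1 - h_t/r_t) = 2U_t/(r_t^2 + r_t h_t)$ with $\pi_t := Y_t/r_t$, this translates into $\|X_t - \pi_t\| = O(t^{-1/2}\log^{\gamma/2} t)$ almost surely.

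It remains to show that $\pi_t$ itself converges. From $dY_t = X_t\,dt$ one computes $\dot\pi_t = (X_t - (h_t/r_t)\pi_t)/r_t$, so $\|\dot\pi_t\| \leq \|X_t - \pi_t\|/r_t = O(t^{-3/2}\log^{\gamma/2} t)$, which is integrable at infinity. Therefore $\pi_t$ is almost surely Cauchy, converging to some $X_\infty \in \mathbb{S}^n$ with $\|\pi_t - X_\infty\| = O(t^{-1/2}\log^{\gamma/2} t)$; the triangle inequality then yields Theorem~\ref{MainResult2}. The main obstacle is the bootstrap step: one must exclude the degenerate scenario in which $X_t$ oscillates fast enough to make $Y_t = o(t)$, destroying the effective attraction. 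The attractive sign $a<0$ is essential here, entering precisely through the good sign of $-2|a|h_t U_t$ once $h_t>0$ has been secured.
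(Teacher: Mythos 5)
Your overall architecture matches the paper's: introduce the cumulative position $Y_t=\int_0^t X_s\,ds$ (the paper calls it $U_t$), its norm $r_t$ and direction $\pi_t=Y_t/r_t$ (the paper's $R_t$, $V_t$), the alignment $\Theta_t=h_t/r_t$, show $\Theta_t\to 1$ with rate $O(t^{-1}\log^\gamma t)$, observe $\|\dot\pi_t\|\le \|X_t-\Theta_t\pi_t\|/r_t$ is integrable so $\pi_t$ converges, and finish by the triangle inequality. Your It\^o computations for $h_t$ and $U_t=r_t^2-h_t^2$ are correct, and your identity $\|X_t-\pi_t\|^2 = 2U_t/(r_t^2+r_t h_t)$ is the right bridge to the stated rate. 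The route you take to the intermediate estimate is genuinely different: the paper works with the SDE for $\Theta_t$ directly, time-changes, and invokes the Bena\"im--Hirsch asymptotic pseudotrajectory machinery together with a stopping-time/comparison-theorem argument; you instead look at the SDE for the tangential residual $U_t$ and use the contractive drift $-2|a|h_tU_t$ to locate a quasi-stationary value $U_t\approx \sigma^2 n h_t/(2|a|)$.

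However, there is a genuine gap, and you name it yourself without filling it: the bootstrap. Your balance argument requires $h_t\gtrsim t$ (equivalently $r_t\gtrsim t$ and $\Theta_t$ bounded away from $0$) to conclude $U_t/r_t^2=O(1/t)$, but $r_t\gtrsim t$ is equivalent to $\Theta_t\to 1$ (since $dr_t=\Theta_t\,dt$), which is what you are trying to prove. The circle must be broken by an \emph{unconditional} growth estimate on $r_t$. The paper does this in Lemma~\ref{rCVGE}: from It\^o's formula for $\langle X_t,U_t\rangle$ and $\|U_t\|^2$ one gets the identity
\[
\exp\!\bigl(-2(\langle X_t,U_t\rangle + \tfrac14 R_t^2)\bigr) = e^{-2t}\,\mathcal{E}_M(t),
\]
where $\mathcal{E}_M$ is an exponential martingale (Novikov applies since $\langle M\rangle_t=O(t^3)$). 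Markov and Borel--Cantelli then give $R_t\Theta_t+\tfrac14 R_t^2\ge \tfrac{c}{2}t$ eventually, hence $R_t\gtrsim\sqrt t$ unconditionally. That $\sqrt t$ lower bound is exactly what makes the time-changed drift eventually dominant and the comparison with the reference ODE $\dot x=\tfrac12(1-x^2)$ work. Your proposal is missing both this unconditional lower bound and the subsequent mechanism (time change, asymptotic pseudotrajectory, stopping times $\tau_j,\sigma_j$) that converts it into $\Theta_t\to 1$; the appeal to Dambis--Dubins--Schwarz and the law of the iterated logarithm for $M^{(2)}$ is asserted but never carried out, and it cannot be carried out without first controlling $h_t$ and $r_t$ from below. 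Until that is supplied, the heuristic $U_t\approx \sigma^2 n h_t/(2|a|)$ is not a proof.
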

We emphasize that Theorems \ref{repulcercle}, \ref{thmBLR} and \ref{thmRai} are particular cases from more general results proved in the respective papers (Theorem 5 in \cite{CEGMB}, Theorem 4.5 in \cite{BLR} and Theorem 3.1 in \cite{Raimond}). 
\subsection{Reformulation of the problem}
From now on, we assume that $a<0$ and that $n$ is fixed. Since the values of $\sigma$ and $a$ do not play any particular role, we assume without loss of generality that $\sigma=1$ and $a=-1$. Thus (\ref{eqSphere}) becomes 
\begin{equation}\label{eq2}
dX(t)=\sigma dW_{t}(X(t))+\int_{0}^{t}\nabla_{\mathbb{S}^n}V_{X_s}(X_t)dsdt,\qquad X(0)=x\in\mathbb{S}^n
\end{equation}
where $V_y(x)=\langle x ,y\rangle =:V(y,x)$. Since $V$ satisfies Hypothesis 1.3 and 1.4 in \cite{BLR}, then (\ref{eq2}) admits a unique strong solution by Proposition 2.5 in \cite{BLR}. We emphasize that equation (\ref{eq}) admits a unique strong solution because the function $\sin(.)$ is Lipschitz continuous (see for example Proposition 1 in \cite{HerrRoy}).
We shall begin by clarifying the two quantities that appear in (\ref{eq2}). 
Firstly, by Example 3.3.2 in \cite{Hsu}, we have
\begin{equation} dW_{t}(X(t))=dB(t) -X_t\langle X_t ,\circ dB_t\rangle  ,
\end{equation}
where $\circ$ stands for the Stratonovitch integral and $(B(t))_{t\geqslant 0}$ is a $(n+1)$ dimensional Brownian motion.

Secondly, for a function $F: \mathbb{R}^{n+1}\rightarrow \mathbb{R}$, we have
\begin{equation}\nabla_{\mathbb{S}^{n}}(F_{\vert_{\mathbb{S}^{n}}})(x)=\nabla_{\mathbb{R}^{n+1}}F(x)-\langle x, \nabla_{\mathbb{R}^{n+1}}F(x)\rangle x ; \; x\in\mathbb{S}^{n}.
\end{equation}
Hence equation (\ref{eq2}) rewrites
\begin{equation}\label{RenfSphere}
dX (t) =dB (t) -X(t)\langle X (t) ,\circ dB(t)\rangle  + \int_0^t (X_s -\langle X_t, X_s\rangle X_t) dsdt ,
\end{equation}
Following the same idea as in \cite{CEGMB}, we set $U_{t}:=\int_{0}^{t}X(s) ds\in\mathbb{R}^{n+1}$ in order to get the SDE on $\mathbb{S}^n\times \mathbb{R}^{n+1}$:
\begin{equation}
\left\{ 
\begin{array}{l}
dX(t)= dB(t) -X(t)\langle X(t) ,\circ dB(t)\rangle + \sum_{j=1}^{n+1}U_j (t) [e_j- X_j (t)X(t)]dt\medskip \\ 
dU(t)= X(t)dt
\end{array}
\right.  \label{EqDecoupl1}
\end{equation}
with initial condition $(X(0),U(0))=(x,0)$ and $e_{1},\cdots ,e_{n+1}$ stand for the canonical basis of $\mathbb{R}^{n+1}$.

A first property is 
\begin{lemma}\label{normEvol} $d(\Vert U(t)\Vert^{2})=2\langle U(t), X(t)\rangle dt,$ where $\Vert .\Vert $ stands for the standard Euclidean norm in $\mathbb{R}^{n+1}$.
\end{lemma}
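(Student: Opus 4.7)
The statement is a straightforward calculus identity rather than a genuine stochastic computation, so the plan is very short. The key observation is that from the second equation of the system (\ref{EqDecoupl1}), the process $U(t)=\int_0^t X(s)\,ds$ is absolutely continuous in $t$ with derivative $X(t)$; in particular it has finite variation and zero quadratic variation, so no It\^o correction will appear.

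With this in hand, I would simply expand $\Vert U(t)\Vert^2 = \langle U(t),U(t)\rangle = \sum_{j=1}^{n+1} U_j(t)^2$ and differentiate componentwise. The ordinary product rule (equivalently, It\^o's formula applied to a process of finite variation) yields
\[
d\bigl(\Vert U(t)\Vert^2\bigr) = 2\sum_{j=1}^{n+1} U_j(t)\, dU_j(t) = 2\sum_{j=1}^{n+1} U_j(t) X_j(t)\, dt = 2\langle U(t), X(t)\rangle\, dt,
\]
which is the claimed identity. There is no genuine obstacle here; the only thing to be careful about is to point out that the absence of a bracket term is justified by $U$ being of finite variation, so that the identity holds pathwise and not merely as a semimartingale decomposition.
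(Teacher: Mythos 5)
Your proof is correct and matches the paper's approach: the paper simply invokes It\^o's formula, and you spell out the computation, correctly noting that $U$ has finite variation so no bracket term appears.
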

\begin{proof} It is a direct consequence of It\^o's Formula.\end{proof}

The paper is organised as follow. In Section \ref{sect2}, we present the detailed strategy used for proving Theorem \ref{MainResult2} whereas the more technical proofs are presented in Section \ref{sect3}.

\section{Guideline of the proof of Theorem \ref{MainResult1}}\label{sect2}
Set $R_t=\Vert U(t)\Vert$ and define $V_t\in\mathbb{S}^{n}$ and $\Theta_t\in [-1,1]$ as follows: 
\begin{equation}
V(t)=\left\{ 
\begin{array}{l}
U(t)/R_t \text{ if } R_t >0\medskip \\ 
X(t) \text{ otherwise }
\end{array}
\right.  
\end{equation}
and 
\begin{equation} \Theta_t=\langle V_t ,X(t)\rangle
\end{equation}

Since $(\int_0^t \langle V(s)-\Theta_s X(s), dB(s))_{t\geqslant 0}$ is a real valued martingale, then by the Martingale representation Theorem by a Brownian motion (see Theorem 5.3 in \cite{Doob}), there exists a real valued Brownian motion $(W_t)_t$ such that 
\begin{equation} \label{martin}
\int_0^t \langle V(s)-\Theta_s X(s), dB(s)\rangle =\int_{0}^t\sqrt{1-\Theta_s^2}dW_s.
\end{equation}
\begin{lemma}  $((\Theta_t,R_t))_{t\geqslant}$ is solution to 
\begin{equation}
\left\{ 
\begin{array}{l}
dY_{t}= \sqrt{1-Y_t^2}dW_{t}+[(r_{t}+\frac{1}{r_{t}})(1-Y_t^2)-\frac{n}{2}Y_t]dt \medskip \\ 
d r_{t}=Y_t dt
\end{array}
\right.  \label{EqPolar}
\end{equation}
whenever $R_t >0$.
\end{lemma}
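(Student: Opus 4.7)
The plan is to derive $dR_t$ and $d\Theta_t$ directly by Itô's calculus, starting from (\ref{RenfSphere}). First I would convert the spherical Brownian motion piece $-X_t\langle X_t,\circ dB_t\rangle$ from Stratonovich to Itô form; the standard correction for the projection $P(x)=I-xx^{T}$ onto the tangent space yields the extra drift $-\tfrac{n}{2}X_t\,dt$, so that
\begin{equation*}
dX_t \;=\; dB_t-X_t\langle X_t,dB_t\rangle-\tfrac{n}{2}X_t\,dt+R_t(V_t-\Theta_t X_t)\,dt,
\end{equation*}
after rewriting the drift $\sum_j U_j(t)[e_j-X_j(t)X_t]=U_t-\langle U_t,X_t\rangle X_t$ in terms of $V_t=U_t/R_t$ and $\Theta_t=\langle V_t,X_t\rangle$. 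The equation $dr_t=Y_t\,dt$ is then immediate from Lemma \ref{normEvol}: $2R_t\,dR_t=d(R_t^{2})=2\langle U_t,X_t\rangle dt=2R_t\Theta_t\,dt$.

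For $d\Theta_t$ I introduce $S_t:=\langle U_t,X_t\rangle=R_t\Theta_t$ and compute it via the Itô product rule. Since $U_t$ is of bounded variation the cross-variation $[U,X]_t$ vanishes, so only the terms $\langle dU_t,X_t\rangle=\|X_t\|^{2}dt=dt$ and $\langle U_t,dX_t\rangle$ contribute. Using $\langle U_t,V_t-\Theta_t X_t\rangle=R_t-\Theta_t\langle U_t,X_t\rangle=R_t(1-\Theta_t^{2})$, one gets
\begin{equation*}
dS_t \;=\; R_t\langle V_t-\Theta_t X_t,dB_t\rangle+\Bigl[1-\tfrac{n}{2}R_t\Theta_t+R_t^{2}(1-\Theta_t^{2})\Bigr]dt.
\end{equation*}

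Because $R_t$ has no martingale part and is strictly positive under the hypothesis, $\Theta_t=S_t/R_t$ is differentiated by the ordinary chain rule in $R_t$: $d\Theta_t=R_t^{-1}dS_t-R_t^{-2}S_t\,dR_t=R_t^{-1}dS_t-R_t^{-1}\Theta_t^{2}\,dt$. Substituting, the drift combines into
\begin{equation*}
\tfrac{1-\Theta_t^{2}}{R_t}+R_t(1-\Theta_t^{2})-\tfrac{n}{2}\Theta_t=(1-\Theta_t^{2})\bigl(R_t+\tfrac{1}{R_t}\bigr)-\tfrac{n}{2}\Theta_t,
\end{equation*}
while the martingale part is $\langle V_t-\Theta_t X_t,dB_t\rangle$, which by (\ref{martin}) equals $\sqrt{1-\Theta_t^{2}}\,dW_t$; this is consistent with the quadratic variation computation $\|V_t-\Theta_t X_t\|^{2}=1-\Theta_t^{2}$. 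Identifying $Y_t=\Theta_t$ and $r_t=R_t$ gives (\ref{EqPolar}).

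The main subtlety is the Stratonovich-to-Itô conversion for the spherical noise: the $-\tfrac{n}{2}X_t\,dt$ correction is the sole source of the $-\tfrac{n}{2}Y_t$ drift in the lemma, and forgetting it would spoil the formula. Everything else is careful bookkeeping with the identities $\langle V_t,X_t\rangle=\Theta_t$ and $\langle U_t,X_t\rangle=R_t\Theta_t$, together with the fact that $R_t$ being of bounded variation eliminates all second-order corrections involving $R_t$ in the Itô expansion of $S_t/R_t$.
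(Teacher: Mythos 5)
Your proof is correct and follows essentially the same route as the paper: apply Itô's formula to $S_t=\langle U_t,X_t\rangle=R_t\Theta_t$, use Lemma \ref{normEvol} to get $dR_t=\Theta_t\,dt$, convert the Stratonovich spherical-noise term to Itô form to produce the $-\tfrac{n}{2}\Theta_t$ drift, and then read off $d\Theta_t$ from $\Theta_t=S_t/R_t$ using that $R_t$ is of bounded variation. The only cosmetic difference is that you perform the Stratonovich-to-Itô conversion at the level of $dX_t$ before pairing with $U_t$, while the paper carries it out directly inside the expansion of $d\langle U_t,X_t\rangle$; these are the same computation.
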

\begin{proof} From Lemma \ref{normEvol}, we deduce  
\begin{equation}\label{R} dR_t =\Theta_t dt.
\end{equation}
Applying It\^o's Formulae to $\langle U(t), X(t)\rangle $ gives
\begin{eqnarray}\label{UX}
\langle U(t), X(t)\rangle &=& t +\int_0^t (\Vert U(s)\Vert^2-\langle U(s), X(s)\rangle^2)ds+\int_0^t\langle U(s) -\langle X(s) ,U(s)\rangle X(s) ,\circ dB_s\rangle.\notag\\
&=&  \int_0^t (1-\frac{n}{2}\langle X(s),U(s)\rangle ds+\int_0^t (\Vert U(s)\Vert^2-\langle U(s), X(s)\rangle^2)ds\\
& &\qquad + \int_0^t\langle U(s) -\langle X(s) ,U(s)\rangle X(s) , dB_s\rangle \notag
\end{eqnarray}
Since $\langle U(t), X(t)\rangle =R_t\Theta_t$, we have by It\^o's Formulae
\begin{equation} d\Theta_t =\frac{1}{R_t}(d(\langle U(t), X(t))-\Theta_t dR_t).
\end{equation}
Combining (\ref{martin}), (\ref{R}) and (\ref{UX}), we obtain
\begin{eqnarray} d\Theta_t &=&(\frac{1}{R_t}-\frac{n}{2}\Theta_t +R_t (1-\Theta_t^2))dt-\frac{\Theta_t^2}{R_t}dt + \langle V(t)-\Theta_t X(t), dB(t)\rangle \notag\\
&=& [(R_{t}+\frac{1}{R_{t}})(1-\Theta_t^2)-\frac{n}{2}\Theta_t]dt +\sqrt{1-\Theta_t^2}dW_{t}
\end{eqnarray}
\end{proof}
A first important result, whose proof is postponed to Section \ref{sect3}, is

\begin{lemma}\label{rCVGE}
We have $\liminf_{t\rightarrow\infty}\frac{R_{t}}{\sqrt{t}}\geqslant 1$ almost-surely.
\end{lemma}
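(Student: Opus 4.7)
My plan is to extract from the polar system (\ref{EqPolar}) a closed stochastic linear ODE for $R_t^2$. Applying It\^o's formula to $R_t\Theta_t$ using $dR_t = \Theta_t\,dt$ and the SDE for $\Theta_t$, then integrating and using $\int_0^t R_s\Theta_s\,ds = \tfrac12 R_t^2$, I obtain
$$R_t\Theta_t + \tfrac{n}{4}R_t^2 \;=\; t + A_t + N_t, \qquad (\star)$$
where $A_t := \int_0^t R_s^2(1-\Theta_s^2)\,ds$ and $N_t := \int_0^t R_s\sqrt{1-\Theta_s^2}\,dW_s$ is a continuous local martingale with $[N]_t = A_t$. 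Since $\tfrac{d}{dt}R_t^2 = 2R_t\Theta_t$, substituting $(\star)$ gives the pathwise linear ODE $\tfrac{d}{dt}R_t^2 + \tfrac{n}{2}R_t^2 = 2(t+A_t+N_t)$, whose variation-of-parameters solution is $R_t^2 = e^{-nt/2}R_0^2 + 2\int_0^t e^{-n(t-s)/2}(s+A_s+N_s)\,ds$ (and $R_0 = 0$ here).

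Next I would control the stochastic term by the standard dichotomy on $[N]_\infty = A_\infty$: on $\{A_\infty < \infty\}$ the martingale $N_t$ converges a.s.\ to a finite limit; on $\{A_\infty = \infty\}$ the law of the iterated logarithm for continuous local martingales gives $|N_t| = O(\sqrt{A_t \log\log A_t}) = o(A_t)$. In either case, for every $\epsilon>0$ the "smoothed" quantity $\int_0^t e^{-n(t-s)/2}N_s\,ds$ is bounded in absolute value by $\epsilon\int_0^t e^{-n(t-s)/2}A_s\,ds$ plus an $\omega$-dependent constant. Discarding the nonnegative $A_s$ contribution then yields $R_t^2 \geq 2\int_0^t e^{-n(t-s)/2}s\,ds - O_\omega(1) = \tfrac{4t}{n}(1+o(1))$, so $\liminf_{t\to\infty} R_t^2/t \geq 4/n$. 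This already proves the lemma in low dimensions $n\leq 4$ (in particular for the circle case $n=1$ needed for Theorem \ref{MainResult1}).

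For $n\geq 5$ the preliminary bound $\sqrt{4/n}$ is below $1$, so a bootstrap is needed. Step 2 has already shown $R_t\to\infty$, which allows one to analyse the SDE for $\Phi_t := 1-\Theta_t$,
$$d\Phi_t = -\sqrt{\Phi_t(2-\Phi_t)}\,dW_t + \bigl[\tfrac{n}{2}(1-\Phi_t) - (R_t+1/R_t)\Phi_t(2-\Phi_t)\bigr]\,dt,$$
whose drift near $\Phi=0$ behaves as $\tfrac{n}{2} - 2R_t\Phi_t$, with a moving stable equilibrium at $\Phi_t^*\sim n/(4R_t)$. Applying the time change $d\tau = R_t\,dt$ (well defined because $R_t\to\infty$) to $\Psi_t := R_t\Phi_t$ converts its SDE, to leading order, into the time-homogeneous Cox--Ingersoll--Ross / squared-Bessel type equation
$$d\Psi_\tau \;=\; (\tfrac{n}{2} - 2\Psi_\tau)\,d\tau - \sqrt{2\Psi_\tau}\,d\tilde W_\tau,$$
which has a unique stationary distribution with mean $n/4$. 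Hence $\Psi_\tau$ is a.s.\ bounded, so $\Phi_t = \Psi_t/R_t \to 0$, and Ces\`aro applied to $R_t = \int_0^t\Theta_s\,ds$ gives $R_t/t\to 1$; a fortiori $R_t/\sqrt t\to\infty$, proving the claim.

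The main obstacle is this last step: the SDE for $\Psi_\tau$ is time-homogeneous only in leading order, and one must quantify the $O(1/R_t)$ subleading corrections carefully to legitimately invoke the mean of the limiting CIR diffusion. Because the time change $\tau\leftrightarrow t$ itself depends on $R_t$, this closes a loop between $R_t$ and $\Psi_\tau$ that must be resolved by a Feller/Lyapunov-type argument controlling both simultaneously.
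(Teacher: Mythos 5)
Your equation $(\star)$, $R_t\Theta_t + \tfrac{n}{4}R_t^2 = t + A_t + N_t$, is exactly the identity the paper obtains from (\ref{UX}) and Lemma \ref{normEvol} (the $\tfrac14$ in the paper's displayed formula should read $\tfrac n4$ for general $n$), so you have found the same starting point. After that the routes diverge sharply. The paper observes that this identity makes $\exp(-2(R_t\Theta_t+\tfrac n4 R_t^2)) = e^{-2t}\exp(M_t-\tfrac12\langle M\rangle_t)$ with $M_t=-2N_t$, checks Novikov, and so gets the exact expectation $\mathbb E[\exp(-2(R_t\Theta_t+\tfrac n4 R_t^2))]=e^{-2t}$; Markov plus Borel--Cantelli then finishes in a few lines. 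You instead turn $(\star)$ into the pathwise linear ODE for $R_t^2$, solve by variation of constants, and control $N_t$ via the dichotomy on $[N]_\infty$ and the law of the iterated logarithm. Both methods, when the algebra is done correctly, extract the same lower bound $\liminf_t R_t/\sqrt t \geq 2/\sqrt n$ from the identity $(\star)$ (the paper's argument, with $c\uparrow 2$ and the correct $\tfrac n4$, also yields precisely this constant). The paper's route is more economical and avoids the LIL entirely, while yours is elementary and pathwise, but the two are essentially equivalent in strength.

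The genuine gap is the one you yourself flag: your argument proves the stated bound $\geq 1$ only for $n\leq 4$. The proposed bootstrap for $n\geq 5$ — passing to $\Phi_t=1-\Theta_t$, rescaling to $\Psi_t=R_t\Phi_t$, time-changing by $d\tau=R_t\,dt$, and invoking the stationary mean $n/4$ of a limiting CIR diffusion — is only sketched, and its difficulties (the subleading $O(1/R_t)$ corrections are not small uniformly in the state, and the time change itself depends on the unknown $R_t$, so the loop between $R_t$, $\Psi_\tau$ and $\tau$ is not closed) are real, not cosmetic. As written this is not a proof; it is a plan for one. Without closing that loop you have not established the lemma in the generality claimed. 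It is worth noting that the way the paper actually uses Lemma \ref{rCVGE} (to guarantee the stopping times $\sigma_j,\tau_j$ in the proof of Lemma \ref{Thetacvge} behave as in (\ref{fixation})) only requires a strictly positive lower bound on $\liminf R_t/\sqrt t$, with the hard-coded thresholds $\tfrac12,\tfrac34$ adjustable; so a cleaner repair of your argument — and arguably of the paper's, once the $\tfrac n4$ is put in — is to state the lemma with the dimension-dependent constant $2/\sqrt n$ and adapt the thresholds downstream, rather than to chase the constant $1$ through a CIR bootstrap.
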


 From this Lemma, we prove in Section \ref{sect3}
\begin{lemma}\label{Thetacvge}
We have that $(\Theta_t)_{t\geqslant 0}$ and $(\frac{R_{t}}{t})_{t>0}$ converge almost surely to $1$. Furthermore the rate of convergence is $O(t^{-1}\log^\gamma  (t)) $, with $\gamma >1$.
\end{lemma}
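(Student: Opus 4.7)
The plan is to exploit the strongly mean-reverting drift in the $\Theta_t$ equation as soon as $R_t$ is large. The key auxiliary process is $\Psi_t := R_t(1-\Theta_t)$. Since $R_t$ is absolutely continuous with $dR_t = \Theta_t\, dt$, It\^o's formula gives
\begin{equation*}
d\Psi_t = -R_t\sqrt{1-\Theta_t^2}\, dW_t + \Bigl\{(1-\Theta_t)\Theta_t + \tfrac{n}{2}R_t\Theta_t - (R_t^2+1)(1-\Theta_t^2)\Bigr\} dt.
\end{equation*}
Using $1-\Theta_t^2 = (1-\Theta_t)(1+\Theta_t)$ and $1-\Theta_t = \Psi_t/R_t$, one checks that, as $\Theta_t \to 1$, this reduces to a Cox--Ingersoll--Ross type equation $d\Psi_t \approx \bigl(\tfrac{n}{2}R_t - 2R_t\Psi_t\bigr)dt - \sqrt{2R_t\Psi_t}\, dW_t$, with mean-reverting level $n/4$ and speed $\kappa_t := 2R_t \to \infty$ by Lemma~\ref{rCVGE}.

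The main technical task is to obtain uniform moment bounds $\sup_{t\geqslant 0}\mathbb{E}[\Psi_t^{2p}]<\infty$ for every integer $p\geqslant 1$. Applying It\^o to $L_t := \Psi_t^{2p}$, carrying through the exact (not just leading-order) drift, and absorbing the lower-order $\Psi_t^{2p-1}$ term via Young's inequality yields an inequality
\begin{equation*}
dL_t \leqslant R_t\bigl(C_p - c_p L_t\bigr)\, dt + d\mathcal{M}_t,
\end{equation*}
for suitable $c_p,C_p>0$ and a local martingale $\mathcal{M}_t$. Multiplying by the integrating factor $e^{2p A_t}$ with $A_t := \int_0^t R_u\, du$ and using the BDG inequality on $\int_0^\cdot e^{2p A_s}\, d\mathcal{M}_s$ (whose quadratic variation is controlled by $\Psi_t$'s own moments through a Gronwall-type loop) gives $\sup_t\mathbb{E}[L_t]<\infty$. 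This is the principal obstacle: the coefficients depend on the random process $R_t$, the noise $\sqrt{1-\Theta_t^2}$ is singular at $\Theta_t = \pm 1$, a Feller-type argument is needed to confirm $\Psi_t$ cannot blow up, and the initial phase where $R_t$ is small must be dealt with by localisation.

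With the moment bounds in hand, a Borel--Cantelli argument on the logarithmically spaced grid $t_k = e^k$ gives $\Psi_{t_k} = O(\log^{\alpha}(t_k))$ a.s.\ for any $\alpha>0$ (choose $p$ with $2p\alpha > 1$ so that $\sum_k k^{-2p\alpha}<\infty$). Oscillations on $[t_k, t_{k+1}]$ are controlled by Doob's inequality on the martingale part of $\Psi$ together with direct estimation of the drift, upgrading the bound to $\Psi_t = O(\log^{\alpha} t)$ a.s.\ in continuous time.

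The statement now follows by a bootstrap. By Lemma~\ref{rCVGE}, $R_t \geqslant \tfrac12 \sqrt{t}$ eventually, so a first application gives $1-\Theta_t = \Psi_t/R_t = O\bigl(t^{-1/2}\log^{\alpha} t\bigr)$. Integrating, $R_t \geqslant t - O\bigl(t^{1/2}\log^{\alpha} t\bigr)$, whence $R_t \geqslant t/2$ for large $t$. Feeding this sharper lower bound back into $\Psi_t/R_t$ upgrades the rate to $1-\Theta_t = O\bigl(t^{-1}\log^{\alpha} t\bigr)$, and one more integration delivers $1 - R_t/t = O\bigl(t^{-1}\log^{\alpha+1} t\bigr)$. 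Since $\alpha>0$ is arbitrary, both $\Theta_t \to 1$ and $R_t/t \to 1$ with rate $O(t^{-1}\log^\gamma t)$ for every $\gamma > 1$.
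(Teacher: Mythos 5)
Your proposal takes a genuinely different route from the paper. The paper first slows down the noise via the time change $\alpha(t)=(\tfrac{3}{2}t)^{2/3}$, shows the time-changed $\Theta$ is an almost-sure asymptotic pseudotrajectory (Bena\"im--Hirsch) of $\dot x=\tfrac12(1-x^2)$, upgrades this to almost-sure convergence through a comparison argument (Ikeda--Watanabe) against a reference SDE, and then obtains the rate by a second comparison against $\cos(\varphi_t)$, where $\varphi_t$ solves an angle SDE with vanishing noise and the rate is read off Proposition~4.6 of \cite{BH}. You, instead, work directly with $\Psi_t=R_t(1-\Theta_t)$, seek uniform moment bounds, and then do Borel--Cantelli plus a bootstrap. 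If it worked, your approach would be more quantitative and would actually deliver $1-\Theta_t=O(t^{-1}\log^\alpha t)$ for \emph{every} $\alpha>0$, which is a slightly stronger statement than the paper proves; that in itself should be a warning sign.

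The genuine gap is in the moment inequality. Writing $L_t=\Psi_t^{2p}$ and substituting $1-\Theta_t=\Psi_t/R_t$, the drift of $L_t$ is
\begin{equation*}
\Bigl[-\tfrac{2p}{R_t}-2pR_t(1+\Theta_t)\Bigr]\Psi_t^{2p}
\;+\; pR_t\bigl[n\Theta_t+(2p-1)(1+\Theta_t)\bigr]\Psi_t^{2p-1}.
\end{equation*}
The restoring coefficient on $\Psi_t^{2p}$ is \emph{not} of order $R_t$ uniformly: when $\Theta_t$ is close to $-1$ the factor $(1+\Theta_t)$ degenerates and the coefficient collapses to $-2p/R_t$, which is \emph{small}. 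So the inequality $dL_t\leqslant R_t(C_p-c_pL_t)\,dt+d\mathcal{M}_t$ that your whole moment scheme rests on is simply false in that regime, and the Young-inequality absorption of the $\Psi_t^{2p-1}$ term also requires $1+\Theta_t$ bounded below. Your ``localisation for small $R_t$'' does not cover this, because $\Theta_t$ near $-1$ can occur with $R_t$ large (and $\Psi_t\approx 2R_t$ then large). The linearisation to a CIR with level $n/4$ presupposes $\Theta_t\to1$, i.e.\ exactly what the lemma is supposed to establish. Finally, the Gronwall loop with the integrating factor $e^{2pA_t}$, $A_t=\int_0^tR_u\,du\gtrsim t^{3/2}$, is super-exponential and one cannot take for granted that the stochastic integral against it is a true martingale or that the BDG constants close the loop. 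These obstacles are all flagged in your write-up but not resolved; as stated, the central estimate fails. The paper's detour through asymptotic pseudotrajectories and comparison is precisely what sidesteps this degeneracy: convergence $\Theta_t\to1$ is secured \emph{before} any local analysis near $\Theta_t=1$ is attempted.
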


From (\ref{R}) and the definition of $U(t)$, it follows from It\^o's Formulae 
\begin{equation}\label{Vprime}
dV_{t}=\frac{1}{R_{t}}(X(t)-\Theta_t V_t)dt.
\end{equation}

\begin{lemma}\label{Vcvge} $V_{t}$ converges almost surely.
\end{lemma}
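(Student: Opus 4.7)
The plan is to show that $V_t$ has finite total variation at infinity, which forces a.s.\ convergence. Since (\ref{Vprime}) expresses $V_t$ as an absolutely continuous function of time (for $t$ large enough that $R_t>0$, which is guaranteed eventually by Lemma \ref{rCVGE}), it suffices to prove
$$\int_{t_0}^{\infty}\left\|\frac{dV_s}{ds}\right\|ds <\infty\quad\text{a.s.}$$
for some (random) $t_0$. Indeed, this gives $\|V_t-V_s\|\le \int_s^t\|dV_u/du\|du\to 0$ as $s,t\to\infty$, so $(V_t)$ is Cauchy in $\mathbb{R}^{n+1}$ and converges.

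The first step is to identify the norm of the drift in (\ref{Vprime}). Since $X(t),V_t\in\mathbb{S}^n$ with $\langle X(t),V_t\rangle=\Theta_t$, a direct expansion yields
$$\|X(t)-\Theta_t V_t\|^2=\|X(t)\|^2-2\Theta_t\langle X(t),V_t\rangle+\Theta_t^2\|V_t\|^2=1-\Theta_t^2,$$
so that $\|dV_t/dt\|=\frac{\sqrt{1-\Theta_t^2}}{R_t}$.

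The second step is to plug in the rates from Lemma \ref{Thetacvge}. That lemma provides $1-\Theta_t=O(t^{-1}\log^\gamma(t))$ and $R_t/t\to 1$ a.s., so $1-\Theta_t^2=(1-\Theta_t)(1+\Theta_t)\le 2(1-\Theta_t)=O(t^{-1}\log^\gamma(t))$, and $R_t\ge t/2$ for $t\ge t_0$ (a random $t_0$). Combining these estimates,
$$\left\|\frac{dV_t}{dt}\right\|=O\bigl(t^{-3/2}\log^{\gamma/2}(t)\bigr),$$
which is integrable on $[t_0,\infty)$. This yields the Cauchy property and hence the a.s.\ existence of $V_\infty:=\lim_{t\to\infty}V_t\in\mathbb{S}^n$.

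The main (and only) obstacle is really concentrated in Lemma \ref{Thetacvge}: once the sharp rate $1-\Theta_t=O(t^{-1}\log^\gamma t)$ is in hand, the convergence of $V_t$ is a deterministic consequence of (\ref{Vprime}) via the elementary identity $\|X(t)-\Theta_t V_t\|=\sqrt{1-\Theta_t^2}$, together with the lower bound $R_t\gtrsim t$. No further martingale or stochastic argument is needed in this step; the small-time behaviour where $R_t$ may vanish does not affect the asymptotic claim because $R_t>0$ for all sufficiently large $t$.
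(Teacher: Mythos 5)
Your proof is correct and follows essentially the same approach as the paper: compute $\|X(t)-\Theta_t V_t\|=\sqrt{1-\Theta_t^2}$, feed in the rate from Lemma \ref{Thetacvge} together with $R_t\gtrsim t$, conclude that $\|dV_t/dt\|=O(t^{-3/2}\log^{\gamma/2}t)$ is integrable, and deduce convergence. You merely spell out the finite-total-variation and Cauchy argument that the paper leaves implicit.
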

\begin{proof} 
Since 
\begin{equation}\label{diff} \Vert X(t)-\Theta_t V_t \Vert =\sqrt{1-\Theta_t^2},\end{equation}
it follows from (\ref{Vprime}) that 
\begin{equation}\label{viteV} \frac{1}{R_t}\Vert X(t)-\Theta_t V_t \Vert =O(t^{-3/2}\log^{\gamma /2} (t)),
\end{equation}
which is an integrable quantity. 
\end{proof}

We can now prove the main result.

\textbf{Proof of Theorem \ref{MainResult1}.}\\
By Lemmas \ref{Thetacvge} and \ref{Vcvge}, there exists a random variable $X_\infty \in\mathbb{S}^n$ such that $\lim_{t\rightarrow\infty} \Theta_t V_t =X_\infty$. 
The rate of convergence follows from the triangle inequality, (\ref{diff}), (\ref{viteV}) and Lemma \ref{Thetacvge}.
\section{Proofs of Lemma \ref{rCVGE} and Lemma \ref{Thetacvge}}\label{sect3}
\subsection{Proof of Lemma \ref{rCVGE}.}
Set $M_t=-2\int_0^t\langle U(s) -\langle X(s) ,U(s)\rangle X(s) , dB_s\rangle$. Then its quadratic variation is
\begin{eqnarray}
\langle M\rangle_t &=& 4\int_0^t \Vert  U(s) -\langle X(s) ,U(s)\rangle X(s) \Vert^2 ds. \notag\\
&=& 4\int_0^t(\Vert U(s)\Vert^2-\langle U(s), X(s)\rangle^2)ds .
\end{eqnarray} 
So, from (\ref{UX}) and Lemma \ref{normEvol}, we obtain
\begin{equation}\exp(-2(\langle X(t) ,U(t)\rangle+\frac{1}{4}R_t^2)) = \exp(-2t)\exp(M_{t}-\frac{1}{2}\left\langle M_{t}\right\rangle),
\end{equation}

Since
 \begin{equation}\left\langle M\right\rangle_{t}\leqslant 4\int_{0}^{t}\Vert U(s)\Vert^2 ds \leqslant  8\int_{0}^{t}\Vert U(0)\Vert^2+s^2 ds=8(\frac{t^3}{3}+\Vert U(0)\Vert^2 t) ,\end{equation}
 $M_{t}$ satisfies the Novikov Condition (see \cite{K-S}, Chapter V, section D, page 198). Therefore $$\mathcal{E}_{M}(t):=\exp(M_{t}-\frac{1}{2}\left\langle M\right\rangle_{t}) $$ is a martingale having 1 as expectation.

Thus
\begin{equation}\mathbb{E}(\exp(-2(\langle X(t) ,U(t)\rangle+\frac{1}{4}R_t^2))=\exp(-2t).
\end{equation}
Consequently, $$\exp(-2(\langle X(t) ,U(t)\rangle+\frac{1}{4}R_t^2))=\exp(-2(R_{t}\Theta_t+\frac{1}{4}R_{t}^{2}))$$ converges almost surely to $0$ by the Markov inequality and the Borel-Cantelli Lemma. Furthermore, for all $0<c<2$, there exists a random variable $T$ such that almost surely  
\begin{equation} \exp(-2(R_{t}\Theta_t+\frac{1}{4}R_{t}^{2}))\leqslant \exp(-ct),\qquad \forall t\geqslant T.
\end{equation}
Hence, for $t\geqslant T$,
$$ R_{t}\Theta_t+\frac{1}{4}R_{t}^{2}\geqslant \frac{c}{2} t. $$
Therefore, by choosing $c=1$, we obtain 
\begin{equation} R_{t}\geqslant -1+\sqrt{t} \text{ for }t\geqslant T.\end{equation} 
This concludes the proof. 

\subsection{Proof of Lemma \ref{Thetacvge}.}
Before starting the proof of Lemma \ref{Thetacvge}, let us recall the Definition of an \textit{asymptotic pseudotrajectory} introduced by Bena\"im and Hirsch in \cite{BH}.
\begin{definition}\label{defpseudoasympt} Let $(M,d)$ be a metric space and $\Phi$ a semiflow; that is $$\Phi:\mathbb{R}_{+}\times M\rightarrow M:(t,x)\mapsto \Phi(t,x)=\Phi_{t}(x)$$ is a continuous map such that $$\Phi_{0}=Id \text{ and } \Phi_{t+s}=\Phi_{t}\circ\Phi_{s}$$ for all $s,t\in\mathbb{R}_{+}$.\\
A continuous function $X:\mathbb{R}_{+}\rightarrow M$ is an \textit{asymptotic pseudotrajectory for $\Phi$ } if
\begin{equation}\label{pseudoAsympt} \lim_{t\rightarrow\infty}\sup_{0\leqslant h\leqslant T}d(X(t+h),\Phi_{h}(X(t)))=0
\end{equation} 
for any $T>0$. In words, it means that for each fixed $T>0$, the curve $X:[0,T]\rightarrow M:h\mapsto X(t+h)$ shadows the $\Phi$-trajectory over the interval $[0,T]$ with arbitrary accuracy for sufficiently large $t$.\\
If $X$ is a continuous random process, then $X$ is an \textit{almost-surely asymptotic pseudotrajectory for $\Phi$ } if (\ref{pseudoAsympt}) holds almost-surely.
\end{definition}
\begin{theorem} (Theorem 1.2 in \cite{BH})\label{rappel} Suppose that $X([0,\infty))$ has compact closure in $M$ and set $L(X)=\bigcap_{t\geqslant 0}\overline{X([t,\infty))} $. Let $A$ be an attractor for $\Phi$ with basin $W$. If $X_{t_k}\in W$ for some sequence $t_k\rightarrow\infty$, then $L(X)\subset A$. 
\end{theorem}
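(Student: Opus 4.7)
The plan is to follow the strategy of Bena\"im and Hirsch: show that $L(X)$ is a compact, $\Phi$-invariant, internally chain transitive subset of $M$, and then invoke the general principle that such a set must be contained in $A$ whenever it meets the basin $W$.

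First I would establish the structural properties of $L(X)$. Non-emptiness and compactness come for free from the assumed compact closure of $X([0,\infty))$. For forward invariance $\Phi_h(L(X))\subset L(X)$, if $X(t_k)\to y\in L(X)$ then the pseudotrajectory bound (\ref{pseudoAsympt}) on $[0,h]$ combined with continuity of $\Phi_h$ gives $X(t_k+h)\to \Phi_h(y)$, so $\Phi_h(y)\in L(X)$. To show $L(X)\cap W\neq\emptyset$, I would use the hypothesis together with the definition of basin: for each $k$, $\Phi_s(X(t_k))\to A$ as $s\to\infty$, so one can choose $S_k$ with $d(\Phi_{S_k}(X(t_k)),A)<1/k$. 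Passing to a subsequence along which $t_k$ grows fast enough relative to $S_k$, the pseudotrajectory bound on $[0,S_k]$ can be made smaller than $1/k$, yielding $d(X(t_k+S_k),A)<2/k$. A compactness extraction in $A$ produces a limit point $z\in L(X)\cap A\subset L(X)\cap W$.

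Next I would prove the internal chain transitivity of $L(X)$: for all $\varepsilon,T>0$ and any $p,q\in L(X)$ there exist $p=z_0,z_1,\ldots,z_n=q$ in $L(X)$ and times $\tau_i\geq T$ with $d(\Phi_{\tau_i}(z_{i-1}),z_i)<\varepsilon$. Such a chain is constructed by following the pseudotrajectory $X$ between successive times when it visits small neighborhoods of the $z_i$'s in $L(X)$ (these visits are guaranteed by the definition $L(X)=\bigcap_{t\geq 0}\overline{X([t,\infty))}$), using the pseudotrajectory bound on each segment to turn pieces of $X$ into genuine $\Phi$-orbits up to $\varepsilon$-error. Once this is in hand, the attractor trapping property closes the argument: pick a fundamental neighborhood $U$ with $\Phi_t(\overline{U})\subset U$ for $t\geq T_0$ and $\bigcap_{t\geq 0}\Phi_t(\overline{U})=A$; for $\varepsilon$ small and $T$ large enough, every $(\varepsilon,T)$-chain starting in a small neighborhood of $A$ stays inside $U$. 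Chain-transiting from $z\in A$ to any $q\in L(X)$ therefore places $q$ in $\overline{U}$, and shrinking $U$ to $A$ through a fundamental system of such neighborhoods yields $q\in A$.

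The main obstacle is the internal chain transitivity of $L(X)$. This is the genuinely nontrivial structural ingredient: it requires carefully combining the uniformity of the pseudotrajectory bound (\ref{pseudoAsympt}) on compact time windows with the recurrent visits of $X$ to every neighborhood of every point of $L(X)$, and is the technical core of the argument in \cite{BH}.
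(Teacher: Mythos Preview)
The paper does not prove this theorem: it is quoted as Theorem~1.2 of \cite{BH} and invoked as a black box in the proof of Lemma~\ref{Thetacvge}. There is therefore no proof in the present paper to compare your proposal against.

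Your outline is nonetheless a faithful sketch of the Bena\"im--Hirsch argument: establish that $L(X)$ is compact, $\Phi$-invariant and internally chain transitive, then apply the attractor trapping lemma. One step deserves care. In your argument that $L(X)\cap A\neq\emptyset$, you choose $S_k$ depending on $X(t_k)$ and then ask the pseudotrajectory bound to be small on the window $[0,S_k]$ at the time $t_k$; but \eqref{pseudoAsympt} only guarantees smallness for $t$ large \emph{after} the window length $T$ has been fixed, so the diagonal extraction as written is circular (changing $t_k$ changes $X(t_k)$ and hence $S_k$). The standard route is instead to pass to a subsequential limit $y\in L(X)$ of the $X(t_k)$---here one typically assumes the $X(t_k)$ lie in a compact subset of $W$, so that $y\in W$---and then use forward invariance of $L(X)$ to obtain $\omega(y)\subset L(X)\cap A$. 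With that repair, your identification of internal chain transitivity as the technical core of \cite{BH} is accurate.
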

The following result gives a sufficient condition between a SDE on $\mathbb{R}$ and the related ODE when the diffusion term vanishes.
\begin{theorem}(Proposition 4.6 in \cite{BH})\label{CRITasymp} Let $g:\mathbb{R}\rightarrow\mathbb{R}$ be a Lipschitz function and $\sigma :\mathbb{R}_{+}\times \mathbb{R}\rightarrow \mathbb{R}$ a continuous function. Assume there exists a non-increasing function $\varepsilon : \mathbb{R}_{+}\rightarrow \mathbb{R}_{+}$ such that $\sigma^{2}(t,x)\leqslant \varepsilon (t)$ for all $(t,x)$ and such that 
\begin{equation}\label{AsympCrit} \forall k>0,\; \int_{0}^{\infty}\exp(-k/\varepsilon (t))dt<\infty \footnote{For example $\varepsilon (t)=O(1/(\log (t))^{\alpha})$ with $\alpha >1$.}. \end{equation} 
Then, all solution of $$dx_{t}=g(x_{t})dt +\sigma (t,x_t)dB_{t}$$ is with probability 1 an asymptotic pseudotrajectory for the flow induced by the ODE $\dot{X}(t)=g(X(t))$. Furthermore, for all $T>0$, there exist constant $C,C(T)>0$, such that for all $\beta >0$,
\begin{equation}\label{vit} \mathbb{P}(\sup_{0\leqslant h\leqslant T}\vert x_{t+h}-\Phi_{h}(x_{t})\vert\geqslant \beta)\leqslant C\exp(-(\beta C(T))^2/\varepsilon (t)).\end{equation}
\end{theorem}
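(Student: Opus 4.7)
The plan is to establish the tail bound (\ref{vit}) first by a Grönwall-plus-exponential-martingale-inequality argument, and then deduce the almost-sure asymptotic pseudotrajectory property by Borel--Cantelli using the integrability hypothesis (\ref{AsympCrit}).

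For the tail bound, fix $T>0$ and $t\geqslant 0$, and set $D_h=x_{t+h}-\Phi_h(x_t)$ and $N_h=\int_0^h\sigma(t+s,x_{t+s})\,dB_{t+s}$. Writing the integrated forms of the SDE and of the ODE and subtracting, the Lipschitz property of $g$ (with constant $L$, say) yields
\begin{equation*}
|D_h|\leqslant L\int_0^h|D_s|\,ds+\sup_{0\leqslant u\leqslant h}|N_u|,
\end{equation*}
so Grönwall's lemma gives $\sup_{0\leqslant h\leqslant T}|D_h|\leqslant e^{LT}\sup_{0\leqslant h\leqslant T}|N_h|$. The continuous martingale $N$ has quadratic variation $\langle N\rangle_h=\int_0^h\sigma^{2}(t+s,x_{t+s})\,ds\leqslant \varepsilon(t)\,T$, where the last inequality uses $\sigma^{2}\leqslant\varepsilon$ together with the monotonicity of $\varepsilon$. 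By Dambis--Dubins--Schwarz, $N_h=\widetilde W_{\langle N\rangle_h}$ for some Brownian motion $\widetilde W$, so the reflection principle produces a Gaussian tail $\mathbb{P}(\sup_{h\leqslant T}|N_h|\geqslant\delta)\leqslant C\exp(-\delta^{2}/(2\varepsilon(t)T))$. Choosing $\delta=\beta e^{-LT}$ delivers (\ref{vit}) with an explicit $C(T)$ of order $e^{-LT}/\sqrt{T}$.

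For the asymptotic pseudotrajectory property, introduce the discretization $t_n=nT$. Applying the bound just obtained with horizon $2T$ and setting $k=(\beta C(2T))^{2}$, the monotonicity of $\varepsilon$ gives
\begin{equation*}
\sum_{n\geqslant 0}\mathbb{P}\Bigl(\sup_{0\leqslant h\leqslant 2T}|x_{t_n+h}-\Phi_h(x_{t_n})|\geqslant\beta\Bigr)\leqslant \frac{C}{T}\int_{0}^{\infty}\exp\bigl(-k/\varepsilon(s)\bigr)\,ds,
\end{equation*}
which is finite by (\ref{AsympCrit}); Borel--Cantelli then yields almost-sure convergence of these suprema to zero along $(t_n)$. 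To pass from the discrete sequence to arbitrary $t$, any $t\geqslant 0$ lies in some $[t_n,t_{n+1}]$, and setting $\tau=t-t_n\in[0,T]$, the semiflow identity $\Phi_{\tau+h}=\Phi_h\circ\Phi_{\tau}$ together with the Lipschitz dependence $|\Phi_h(y)-\Phi_h(z)|\leqslant e^{Lh}|y-z|$ (itself a Grönwall consequence of $\dot X=g(X)$) bounds $|x_{t+h}-\Phi_h(x_t)|$ by $|x_{t_n+\tau+h}-\Phi_{\tau+h}(x_{t_n})|+e^{LT}|x_{t_n+\tau}-\Phi_\tau(x_{t_n})|$, and hence $\sup_{h\leqslant T}|x_{t+h}-\Phi_h(x_t)|\leqslant (1+e^{LT})\sup_{s\leqslant 2T}|x_{t_n+s}-\Phi_s(x_{t_n})|$. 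The main technical subtlety is precisely this last step: one must enlarge the horizon from $T$ to $2T$ so that a single $t_n$ dominates the whole half-line past it, and combine this with the Lipschitz sensitivity of the flow; once this is in place, the Gaussian tail from the first paragraph and the integrability assumption do the rest.
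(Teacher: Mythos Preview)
The paper does not actually prove this statement: it is quoted verbatim as Proposition~4.6 of Bena\"im--Hirsch \cite{BH} and used as a black box, so there is no ``paper's own proof'' to compare against. That said, your argument is correct and is essentially the standard proof one finds in the source: reduce $\sup_h|x_{t+h}-\Phi_h(x_t)|$ to $\sup_h|N_h|$ via Gr\"onwall and the Lipschitz constant of $g$, control the martingale supremum by a Gaussian tail using the deterministic bound $\langle N\rangle_T\leqslant T\varepsilon(t)$ (your DDS-plus-reflection route is equivalent to the exponential-martingale/Bernstein inequality typically invoked), and then pass from the tail bound to the almost-sure statement by Borel--Cantelli along the grid $t_n=nT$ combined with the flow-Lipschitz interpolation over $[t_n,t_{n+1}]$. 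The only cosmetic remark is that, for the Borel--Cantelli step, one should make explicit that $\beta>0$ runs over a countable set (e.g.\ $\beta=1/m$) before intersecting the full-measure events; otherwise the write-up is clean and self-contained.
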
 
\begin{remark}\label{perturbdeterm} The same result holds if $(x_t)_{t\geqslant 0}$ is solution to the SDE $$dx_{t}=g(x_{t})dt +\sigma (t,x_t)dB_{t}+ \delta (t)h(x_{t})dt, $$ where $h$ is a bounded function and $\delta$ is a non-negative function with $\lim_{t\rightarrow \infty}\delta (t)=0$. In that  case, for all $T>0$, there exist constant $C,C(T,h)>0$, such that for all $\beta >0$,
\begin{equation}\label{vit2} \mathbb{P}(\sup_{0\leqslant h\leqslant T}\vert x_{t+h}-\Phi_{h}(x_{t})\vert\geqslant \beta)\leqslant C\exp(-(\beta-\sup_{s\in [t,t+T]}\delta (s))^2 C(T,h)/\varepsilon (t)).\end{equation}
\end{remark}
\textbf{Proof of Lemma \ref{Thetacvge}:}

The proof is divided into two parts.

\textbf{\textit{Proof of the convergence:}}

First we prove that $\Theta_t$ converges almost surely to $1$. Recall that 
\begin{equation} d\Theta_t = \sqrt{1-\Theta_t^2}dW_{t}+[(R_{t}+\frac{1}{R_{t}})(1-\Theta_t^2)-\frac{n}{2}\Theta_t]dt.
\end{equation}

Define $\alpha(t)=(\frac{3}{2}t)^{\frac{2}{3}}$ so that $\dot{\alpha}(t)=\alpha^{-\frac{1}{2}}(t)$. Set $Z_{t}:=\Theta_{\alpha (t)}$ and $M_{t}=W_{\alpha (t)}$. Thus $(M_{t})_{t}$ is a martingale with respect to the filtration $\mathcal{G}_{t}=\sigma\{ W_{s}\; \vert\; 0\leqslant s\leqslant \alpha (t)\}$, whose quadratic variation at time $t$ is $\alpha(t)=\int_{0}^{t}(\sqrt{\dot{\alpha}(s)})^{2}ds$.\\ Then by the Theorem of Martingale Representation by a Brownian motion (see Theorem 5.3 in \cite{Doob}), there exists a Brownian motion $(B_{t}^{(\alpha)})_{t} $ adapted to $(\mathcal{G}_{t})_{t}$ such that $$M_{t}=\int_{0}^{t}\sqrt{\dot{\alpha} (s)}dB_{s}^{(\alpha)}. $$
Then 
\begin{eqnarray}
Z_{t}&=& \int_{0}^{\alpha (t)}\sqrt{1-\Theta_s^2}dW_{s}+\int_{0}^{\alpha (t)}(R_{s}+\frac{1}{R_s})(1-\Theta_s^2)ds-\frac{n}{2}\int_{0}^{\alpha (t)}\Theta_s ds\\
&=& \int_{0}^{t}\sqrt{\dot{\alpha} (s)}\sqrt{1-Z_s^2}dB_{s}^{(\alpha)}-\int_{0}^{t}\frac{R_{\alpha(s)}+\frac{1}{R_{\alpha(s)}}}{\sqrt{\alpha(s)}}ds-\frac{n}{2}\int_{0}^{t}\dot{\alpha}(s)Z_s ds, \notag
\end{eqnarray}
For $y\in [-1,1]$, let $(Y_t^y)_{t\geqslant 0}$ be the solution to the SDE taking values in $[-1,1]$
 \begin{equation}
\left\{ 
\begin{array}{l}
dY_{t}^y= \sqrt{\dot{\alpha}(t)}\sqrt{1-(Y_t^y)^2}dB_{t}^{(\alpha)}+[\frac{1}{2}(1-(Y_t^y)^2)-\frac{n}{2}\dot{\alpha}(t)Y_t^y]dt \medskip \\ 
Y_0^y=y
\end{array}
\right.  \label{Ysyst}
\end{equation} 
We divide the proof of the convergence in two steps. In the first one, we prove for all $y\in [-1,1]$, $Y_t^y$ converges almost surely to $1$; and then prove the convergence of $Z_t$ to $1$ in the second one.

\textit{Step I:} Let $y\in [-1,1]$. In order to lighten the notation, we omit the superscript $y$ in $Y_t^y$ in this step. We start by proving that $Y_t$ is an asymptotic pseudotrajectory for the flow induced by the ODE  
\begin{equation}\label{limODE} \dot{x}=\frac{1}{2}(1-x^2).
\end{equation} 
In order to achieve it, we use Theorem \ref{CRITasymp}. Since $x\mapsto (1-x^2)$ is Lipschitz continuous on $[-1,1]$ and that $Z_t\in [-1,1]$ for all $t\geqslant 0$, it remains to prove the hypothesis concerning the noise term.

Set 
\begin{equation}
\varepsilon (t):=\dot{\alpha}(t)=(\frac{3}{2}t)^{-\frac{1}{3}}.
\end{equation}
 We have to show that $\varepsilon (t)$ satisfies (\ref{AsympCrit}). But this is immediate because for all $k>0$  
\begin{equation}\int_{0}^{\infty}\exp(-k t^{1/3})dt<\infty .\end{equation}

Since $Y_t\in [-1,1]$ for all $t\geqslant 0$, it is clear that the condition in Remark \ref{perturbdeterm} is satisfied. 
Consequently, by Theorem \ref{CRITasymp}, $(Y_{t})_{t}$ is an asymptotic pseudotrajectory of (\ref{limODE}). \\

Because $\{1\}$ is an attractor for the flow induced by (\ref{limODE}) with basin $]-1,1]$ and that almost-surely $Y_t\in ]-1,1]$ infinitely often, then 
\begin{equation} \lim_{t\rightarrow\infty}Y_t = 1 \text{ a.s.} 
\end{equation}

\textit{Step II:} Our goal is to prove 
\begin{equation}\mathbb{P}(\lim_{t\rightarrow\infty}Z_t =1)=1.\end{equation}
Define the stopping times $\tau_0=0$, 
\begin{equation}\tau_j=\inf (t>\sigma_j\mid \frac{R_{\alpha(t)}}{\sqrt{\alpha(t)}}=\frac{1}{2}),\quad j\geqslant 1\end{equation}
and 
\begin{equation}\sigma_j=\inf (t>\tau_{j-1}\mid \frac{R_{\alpha(t)}}{\sqrt{\alpha(t)}}=\frac{3}{4}),\quad j\geqslant 1.\end{equation}
By Lemma \ref{rCVGE}, we have 
\begin{equation}\label{fixation}\mathbb{P}(\bigcup_{j\geqslant 1}\{\tau_j =\infty\})=1 \text{ and } \mathbb{P}(\sigma_j <\infty\mid\tau_{j-1}<\infty)=1.\end{equation}
Let start by computing $\mathbb{P}(\lim_{t\rightarrow\infty}Z_t =1,\; \tau_j=\infty)$. For $s\in[\sigma_j ,\tau_j]$, we have $$\frac{R_{\alpha(s)}+\frac{1}{R_\alpha(s)}}{\sqrt{\alpha(s)}}\geqslant \frac{1}{2}. $$
So, by a comparison result (see Theorem 1.1, Chapter VI in \cite{IW}),
\begin{equation} \mathbb{P}(Z_{(t+\sigma_j)\wedge \tau_j}\geqslant Y_{(t+\sigma_j)\wedge \tau_j}^{Z_{\sigma_j}},\; \forall t\geqslant 0)=1.\end{equation}
As a consequence, we have
\begin{eqnarray}\mathbb{P}(\lim_{t\rightarrow\infty}Z_t =1,\; \tau_j=\infty,\; \sigma_j<\infty)&\geqslant& \mathbb{P}(\lim_{t\rightarrow\infty}Y_{t+\sigma_j}^{Z_{\sigma_j}} =1,\; \tau_j=\infty,\; \sigma_j<\infty)\notag\\
&=&\mathbb{P}(\tau_j=\infty,\; \sigma_j<\infty).\end{eqnarray}
where the last equality follows from Step I.
Because $\{\sigma_{j} =\infty\}\subset \{\tau_{j} =\infty\}$, it follows from (\ref{fixation})
\begin{equation}\mathbb{P}(\lim_{t\rightarrow\infty}Z_t =1,\; \tau_j=\infty)=\mathbb{P}(\lim_{t\rightarrow\infty}Z_t =1,\; \tau_{j-1}=\infty)+\mathbb{P}(\tau_j=\infty,\; \sigma_j<\infty).\end{equation}
Thus,
\begin{eqnarray} \mathbb{P}(\lim_{t\rightarrow\infty}Z_t =1,\; \tau_j=\infty)&=&\sum_{k=1}^j \mathbb{P}(\tau_k=\infty,\; \sigma_k<\infty)\notag\\
&=& \mathbb{P}(\tau_j=\infty).
\end{eqnarray}
Since $(\{\tau_j =\infty\})_{j\geqslant 0}$ is an increasing family of event, we obtain from (\ref{fixation})
\begin{eqnarray}\mathbb{P}(\lim_{t\rightarrow\infty}Z_t =1)&=&\lim_{j\rightarrow\infty}\mathbb{P}(\lim_{t\rightarrow\infty}Z_t =1,\; \tau_j=\infty)\notag\\
&=& \lim_{j\rightarrow\infty}\mathbb{P}(\tau_j=\infty)\notag\\
&=& 1.\end{eqnarray}
Consequently, $\Theta_t$ converges a.s to $1$.
Therefore,  
\begin{equation}\label{asympr}\frac{R_t}{t}=\frac{1}{t}(R_{0}+\int_{0}^t \Theta_s ds) \end{equation}
converges a.s to $1$.

\textbf{\textit{Proof of the rate of convergence:}}

In view of the previous part, it suffices to determine the rate of convergence to $1$ of $Y_t^y$, where $\alpha(t)$ is now $\alpha (t)=\sqrt{2t}$. 

For $c> 0$, let $(Y_c^y (t))_{t\geqslant 0}$ be the solution to the SDE
 \begin{equation}
(SDE(c,y))\left\{ 
\begin{array}{l}
dY_{c}^y (t)= \sqrt{c}\sqrt{\dot{\alpha}(t)}\sqrt{1-(Y_c^y(t))^2}dB_{t}^{(\alpha)}+[\frac{1}{2}(1-(Y_c^y(t))^2)-\frac{n}{2}\dot{\alpha}(t)Y_c^y(t)]dt \medskip \\ 
Y_c^y(t)=y
\end{array}
\right.  \label{Ysyst2}
\end{equation}
with $\alpha (t)=\sqrt{2t}$. Note that $Y_t^y$ is a strong solution to $SDE(1,y)$. Let $(\varphi (t))_{t\geqslant 0}$ be the solution to the SDE
 \begin{equation}
\left\{ 
\begin{array}{l}
d\varphi (t)= \sqrt{n\dot{\alpha}(t)}d\bar{B}_{t}-\frac{1}{2}\sin (\varphi (t))dt \medskip \\ 
\varphi (0)=\varphi_0
\end{array}
\right.  \label{phisyst}
\end{equation}
where $(\bar{B}_t)_t$ is a real Brownian motion. Then $(\cos(\varphi (t)))_{t\geqslant 0}$ is a weak solution to $SDE(n,\cos(\varphi_0))$. So, if $\vert \varphi (t)\vert =O(\Delta (t))$, then by a Taylor expansion, $1-\cos(\varphi (t))=O(\Delta^2(t))$. 

Since for all $c>0$, $Y_{1}^y (t)$ and $Y_c^y(t)$ converge to $1$ with a similar rate (by using the fact that $c\dot{\alpha}(t)=\dot{\alpha}(\frac{t}{c^2})$), it suffices to find out the rate of $Y_n^y (t)$.\\   
The goal now consists on identifying such a function $\Delta (.)$.
We proceed in three steps. First, we assume that there exists a positive decreasing function $\delta$ that converges to $0$ such that
\begin{enumerate}
\item[i.] for all $\nu>0$ and $0<\kappa <1$, $\lim_{t\rightarrow \infty} \delta (t)\exp(\nu t)=\infty$ and there exists $0<c(\kappa)<\infty$ such that $\lim_{t\rightarrow \infty}\frac{\delta(\kappa t)}{\delta (t)}=c(\kappa)$,\\
\item[ii.] for all $T>0$, $\sup_{0\leqslant h\leqslant T}\vert \varphi (t+h)-\Phi_{h}(\varphi (t))\vert =O(\delta (t))$, where $\Phi$ is the flow induced by the ODE $\dot{x}=-\frac{1}{2}\sin (x)$
\end{enumerate}
and prove that $\vert Z_{t}\vert =O(\delta (t))$.
Next, we prove the existence of such a function $\delta$ and conclude the proof in the last step.
\bigskip

\textit{Step I:} Let $T>0$. From the convergence of $\cos(\varphi(t,\omega))$ to $1$, there exists $t_1(\omega)$ such that, without loss of generality, $ \varphi (t ,\omega) >0 $ for all $t>t_{1}(\omega)$. Since $\vert \sin (x)\vert \geqslant \frac{2}{\pi}x=:\lambda x$ for $ x\in [-\frac{\pi}{2},\frac{\pi}{2}]$, then for $x\in [-\frac{\pi}{2},\frac{\pi}{2}]$, 
\begin{equation*}
0\leqslant \vert \Phi_{h}(x)\vert\leqslant e^{-\lambda h}\vert x\vert.
\end{equation*}
Thus,
\begin{eqnarray}
\vert \varphi (t+T)\vert &\leqslant & \vert \varphi (t+T)-\Phi_{T}(\varphi (t))\vert+ \vert \Phi_T (\varphi (t))\vert \notag\\
&\leqslant & \delta (t)+ e^{-\lambda T}\vert \varphi (t)\vert. \label{rec}
\end{eqnarray}
Set $y_{k}:= \vert \varphi (kT)\vert$, $\delta_{k}=\delta (kT)$ and $\gamma := e^{-\lambda T} <1$. Thanks to (\ref{rec}), we have
\begin{equation} y_{k+1}\leqslant \delta_k +\gamma y_k.
\end{equation}
Hence 
\begin{equation} y_{k+r}\leqslant \gamma^k y_r +\sum_{j=0}^{k-1}\delta_{k+r-j}\gamma^j.
\end{equation}
By assumption i., $\delta_k \gamma^{-k}$ converges to $\infty$. Thus
\begin{equation} y_k =O(\delta_k).
\end{equation}
For $kT\leqslant t\leqslant (k+1)T$, we have
\begin{eqnarray}
\vert \varphi (t)\vert &\leqslant & \vert \Phi_{t-kT}(\varphi (kT)) -\varphi (t)\vert+ \vert \Phi_{t-kT} (\varphi (kT))\vert \notag\\
&\leqslant & \delta_k+ e^{-\lambda (t-kT)}y_k\notag\\
&=& O(\delta_k). 
\end{eqnarray}
Hence, $$\vert \varphi (t)\vert = O(\delta (t)). $$

\textit{Step II:} Let $T>h>0$. From Theorem \ref{CRITasymp}, there exists constants $C,C(T)$ such that
\begin{equation}
\mathbb{P}(\sup_{0\leqslant h\leqslant T}\vert \varphi (t+h)-\Phi_{h}(\varphi (t))\vert>\beta) \leqslant  C\exp(-\frac{(\beta C(T))^2}{n\dot{\alpha} (t)}).
\end{equation}
Defining $\beta_{\gamma,T} (t):= \frac{1}{C(T)}(\dot{\alpha}^{1/2}(t)\log^{\frac{\gamma}{2}} (1+t))$, with $\gamma >1$, we get
\begin{equation} \mathbb{P}(\sup_{0\leqslant h\leqslant T}\vert \varphi (t+h)-\Phi_{h}(\varphi (t))\vert>\beta_{\gamma ,T}(t))\leqslant C\exp(-\log^{\gamma} (1+t))
\end{equation}
Since 
\begin{equation}\int_0^\infty \exp(-\log^{\gamma} (1+t))dt<\infty
\end{equation}
we deduce by the Borel-Cantelli Lemma that almost-surely
\begin{eqnarray} \sup_{0\leqslant h\leqslant T}\vert \varphi (t+h)-\Phi_{h}(\varphi (t))\vert &=& O(\beta_{\gamma,T} (t)) \notag\\
&=& O(\dot{\alpha}^{1/2}(t)\log^{\frac{\gamma}{2}} (t))\notag\\
&=& O(t^{-1/4}\log^{\frac{\gamma}{2}} (t)). 
\end{eqnarray}

\textit{Step III:} Since $\alpha(t)= \sqrt{2t}$, it is clear that assumptions i. and ii. are satisfied. So, from Steps I and II,
\begin{equation}\vert \varphi_{t}\vert = O(t^{-1/4}\log^{\frac{\gamma}{2}} (t)).
\end{equation}
Therefore, 
\begin{equation} \vert Y_{t}\vert = O(t^{-1/2}\log^\gamma (t)).
\end{equation}
Consequently, 
\begin{equation} \vert Z_{t}\vert = O(t^{-1/2}\log^\gamma (t)).
\end{equation}
and so
\begin{equation} \vert \Theta_{t}\vert = O(t^{-1}\log^\gamma (t)).
\end{equation}
\section{Conclusion}
The motivating model of this work was the real-valued self-attracting diffusion 
\begin{equation*}
dX_{t}=\sigma dW_{t}+a\int_{0}^{t}\sin(X_{t}-X_{s})dsdt,\qquad X_{0}=0.
\end{equation*} 
Seeing it as an angle, it turned out that the almost sure convergence of $X_t$ was an immediate consequence of the more general diffusion on the $n-$dimensional unit sphere $\mathbb{S}^n$
\begin{equation*}
dX(t)=\sigma  dW_{t}(X(t))-a\int_{0}^{t}\nabla_{\mathbb{S}^n}V_{X_s}(X_t) dsdt,\qquad X(0)=x\in\mathbb{S}^n
\end{equation*} 
with $V_y(x)=\langle x,y\rangle$.

It would now be interesting to study the self-reinforced diffusion $$dX_{t}=\sigma dW_{t}+\sum_{k=1}^{n}ka_{k}\int_{0}^{t}\sin(k(X_{t}-X_{s}))dsdt, $$
where the coefficient $a_{k}\neq 0$ are such that $\sum_{k=1}^{n}k^{2}a_{k}<0$.

Because $\sum_{k=1}^{n}k^{2}a_{k}=(\sum_{k=1}^{n}ka_{k}\sin(k.))'(0)$ and that it has to play a more and more important role if $(X_{t})_{t}$ localizes, it sounds reasonable to formulate the following conjecture:
\begin{conjecture} Let $(X_{t})_{t\geqslant 0}$ be the solution to the SDE $$dX_{t}=\sigma dW_{t}+\sum_{k=1}^{n}ka_{k}\int_{0}^{t}\sin(k(X_{t}-X_{s}))dsdt,\: X_{0}=x. $$
If $\sum_{k=1}^{n}k^{2}a_{k}<0$ (resp. $\sum_{k=1}^{n}k^{2}a_{k}>0$), then $X_{t}$ converges almost-surely (resp. $\limsup_{t}X_{t}>\liminf_{t}X_{t}$).
\end{conjecture}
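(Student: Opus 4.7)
I would replay the proof of Theorem~\ref{MainResult1} harmonic by harmonic. For $k=1,\ldots,n$ set
$$U^{(k)}_t := \int_0^t e^{ikX_s}\,ds \in\mathbb{C},\qquad Y^{(k)}_t := e^{ikX_t},$$
so that the drift of the SDE rewrites as $\mathrm{Im}\bigl(\sum_{k=1}^n ka_k\, Y^{(k)}_t\,\overline{U^{(k)}_t}\bigr)$. Writing $U^{(k)}_t = R^{(k)}_t e^{i\psi^{(k)}_t}$ and $\Theta^{(k)}_t := \cos(kX_t-\psi^{(k)}_t)\in[-1,1]$ gives a multi-harmonic analogue of the polar decomposition used in Section~\ref{sect2}, and It\^o's formula yields a closed SDE on $\mathbb{R}\times(\mathbb{R}_+\times\mathbb{S}^1)^n$ that plays the role of (\ref{EqDecoupl1}).

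\emph{Step 1 (growth and pseudotrajectory).} Generalise Lemma~\ref{rCVGE}: find weights $c_k>0$ so that $Q_t := \sum_k c_k\bigl(R^{(k)}_t\Theta^{(k)}_t+(R^{(k)}_t)^2/4\bigr)$ satisfies an exponential-martingale identity $\exp(-2Q_t)=\exp(-2ct)\,\mathcal{E}_{M^Q}(t)$; Novikov together with Markov/Borel--Cantelli then give $Q_t\gtrsim t$, hence $\sum_k c_k R^{(k)}_t \gtrsim \sqrt{t}$. Next, perform the time change $\alpha(t)=(\tfrac{3}{2}t)^{2/3}$ of Lemma~\ref{Thetacvge} on each $\Theta^{(k)}_t$ and invoke Theorem~\ref{CRITasymp} (combined with Remark~\ref{perturbdeterm} to absorb the $1/R^{(k)}_t$ remainder thanks to the lower bound above) to show that $(\Theta^{(k)}_t)_k$ is an a.s.\ asymptotic pseudotrajectory of an autonomous ODE $\dot{\Theta}=F(\Theta)$ on $[-1,1]^n$.

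\emph{Step 2 (stability and conclusion).} The synchronised point $(1,\ldots,1)$ is always a fixed point of $F$ (it corresponds to $X_t$ being frozen at $\psi^{(1)}_t$), and linearising $F$ there produces a matrix whose leading eigenvalue is controlled by $\sum_k k^2a_k$, so this fixed point is asymptotically stable (resp.\ linearly unstable) under the sign assumption of the conjecture. In the attractive case run the stopping-time bootstrap of Step~II in the proof of Lemma~\ref{Thetacvge}: once one of the $R^{(k)}_t/\sqrt t$ exceeds a threshold, the pseudotrajectory property forces $\Theta^{(k)}_t\to 1$ for every $k$; the analogue of Lemma~\ref{Vcvge} then yields convergence of every $Y^{(k)}_t$ on $\mathbb{S}^1$, and continuity of $X_t$ together with the quantitative rate $O(t^{-1/2}\log^{\gamma/2}t)$ on $Y^{(1)}_t$ upgrades this to convergence of $X_t$ in $\mathbb{R}$. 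In the repulsive case, adapt the oscillation argument of~\cite{CEGMB} (used for Theorem~\ref{repulcercle}) at the now-unstable fixed point to conclude $\limsup X_t>\liminf X_t$.

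\emph{Main obstacle.} The hard part is that the sign condition $\sum_k k^2a_k<0$ controls only the \emph{local} dynamics at the synchronised fixed point. When the $a_k$'s have mixed signs the one-dimensional ODE $\dot x=-\tfrac12\sum_k ka_k\sin(kx)$ typically has several stable equilibria, so $F$ admits spurious attractors of the form $\Theta^{(k)}=\Theta^\star_k\ne 1$ in which the process could a priori be trapped. Ruling out such trapping, and in parallel finding a quadratic form $Q_t$ for which the exponential-martingale computation of Lemma~\ref{rCVGE} still closes under the mere sign assumption (rather than under $a_k<0$ for every $k$, where a direct transcription would work), is the delicate point that prevents a mechanical generalisation of the single-harmonic proof and probably explains why the statement is formulated as a conjecture.
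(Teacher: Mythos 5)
You have been asked to prove a statement that the paper itself presents only as a \emph{conjecture}: the author offers no argument beyond the heuristic observation that $\sum_{k=1}^{n}k^{2}a_{k}$ is the linearisation coefficient of the cumulative drift at a synchronised configuration. There is therefore no proof in the paper to compare against, and your proposal is, as you yourself say at the end, a roadmap rather than a proof.

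The roadmap is the natural one. The multi-harmonic lift $U^{(k)}_t=\int_0^t e^{ikX_s}\,ds$ mirrors the $U_t=\int_0^t X_s\,ds$ of Section~\ref{sect2}, and Steps~1 and~2 transplant, harmonic by harmonic, Lemma~\ref{rCVGE}, the time-change and pseudotrajectory reduction of Lemma~\ref{Thetacvge}, and the stopping-time bootstrap. The two obstructions you flag in your final paragraph are precisely what keeps the statement open, and I would state them a little more sharply. First, the exponential-martingale identity in the proof of Lemma~\ref{rCVGE} rests on the algebraic fact that, for a single attractive harmonic, $\langle X_t,U_t\rangle+\tfrac14\Vert U_t\Vert^2$ has constant It\^o drift plus a martingale whose bracket exactly cancels; once the $a_k$ have mixed signs there is no evident quadratic functional $Q_t$ of the $(R^{(k)}_t,\Theta^{(k)}_t)$ whose drift is bounded below by a positive constant, so the coercivity $\sum_k c_k R^{(k)}_t\gtrsim\sqrt t$ you need in Step~1 is simply asserted, not obtained. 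Second, even granting the pseudotrajectory reduction, Theorem~\ref{rappel} only confines the limit set to \emph{some} attractor of the limiting field $F$ whose basin is visited infinitely often; when the one-dimensional vector field $\sum_k ka_k\sin(kx)$ has several stable zeros (generic once the $a_k$ change sign), $F$ has non-synchronised attractors, and nothing in your argument prevents $(\Theta^{(k)}_t)_k$ from being captured by one of them, which would not force convergence of $X_t$. Your proposal does not close either gap, which is consistent with the status of the statement as a conjecture; identifying a Lyapunov-type functional $Q_t$ that works under the bare sign condition, and an argument excluding the spurious attractors of $F$, are what a genuine proof would have to supply.
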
 

\appendix
\section{Almost sure convergence for the linear case}
Since $\sin (x)\sim x$ on a small neighbourhood of $0$, the aim to this appendix is to show that the rate of convergence that we obtain is quasi-optimal.
\begin{proposition}\label{linearise0} Let $X_{t}$ be the solution of the SDE
\begin{equation}\label{linearise} dX_{t}=-\lambda X_{t} dt +\sqrt{\varepsilon (t)}dB_{t},
\end{equation}
with initial condition $X_{0}=x$. $(B_{t})_{t}$ stands for a real Brownian motion and $\lambda >0 $. Assume that $\varepsilon (.)$ is a non-increasing positive continuous function such that $\lim_{t\rightarrow\infty}\varepsilon (t)=0$. 

Set $\sigma_{t}^2=e^{-2\lambda t}\int_0^t e^{2\lambda s}\varepsilon (s)ds$. 
\begin{enumerate} 
\item If $\int_0^t e^{2\lambda s}\varepsilon (s)ds<\infty$, then $\vert X_{t}\vert =O(e^{-\lambda t})$\\
\item If $\int_0^t e^{2\lambda s}\varepsilon (s)ds=\infty$, then $\vert X_{t}\vert =O(\sigma_{t}\sqrt{\log (t)})$. In particular,
\begin{enumerate} 
\item If $\varepsilon (t)=O(e^{-2\alpha t})$ with $\alpha <\lambda$, then $\sigma_{t}^2 =O(e^{-2\alpha t})$\\
\item If $\varepsilon (t)=O(t^{-\alpha})$, $\alpha >0$, then $\sigma_t^2= O(\varepsilon (t))$
\end{enumerate}
\end{enumerate}
\end{proposition}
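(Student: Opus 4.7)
The plan is to solve the linear SDE \eqref{linearise} explicitly by variation of constants and then analyse each regime of the quadratic variation of the resulting Gaussian martingale. Applying It\^o's formula to $e^{\lambda t}X_t$ gives
\begin{equation*}
X_t = x e^{-\lambda t} + e^{-\lambda t} M_t, \qquad M_t := \int_0^t e^{\lambda s}\sqrt{\varepsilon(s)}\, dB_s,
\end{equation*}
where $M$ is a continuous centred martingale with quadratic variation $\langle M\rangle_t = \int_0^t e^{2\lambda s}\varepsilon(s)\, ds$, so that $\sigma_t^2 = e^{-2\lambda t}\langle M\rangle_t$. The dichotomy in the statement is exactly the dichotomy between $\langle M\rangle_\infty < \infty$ and $\langle M\rangle_\infty = \infty$.

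For part 1, the assumption $\langle M\rangle_\infty<\infty$ means $M$ is an $L^2$-bounded martingale, so it converges almost surely to a finite limit and is therefore uniformly bounded in $t$. This gives $|X_t|\le (|x|+\sup_s|M_s|)\, e^{-\lambda t}=O(e^{-\lambda t})$ a.s. For part 2, I would use the Dambis--Dubins--Schwarz representation to write $M_t = W_{\langle M\rangle_t}$ for a Brownian motion $W$ on a possibly enlarged probability space. Since $\langle M\rangle_t\to\infty$, the law of the iterated logarithm applied to $W$ yields, almost surely,
\begin{equation*}
|M_t| \le (1+o(1))\sqrt{2\langle M\rangle_t \log\log\langle M\rangle_t}.
\end{equation*}
Because $\varepsilon$ is non-increasing and bounded by $\varepsilon(0)$, we have $\langle M\rangle_t \le \varepsilon(0)(e^{2\lambda t}-1)/(2\lambda)$, so $\log\log\langle M\rangle_t=O(\log t)$. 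Substituting back into $X_t=e^{-\lambda t}(x+M_t)$ and recalling $\sigma_t=e^{-\lambda t}\sqrt{\langle M\rangle_t}$ produces $|X_t|=O(\sigma_t\sqrt{\log t})$, as claimed.

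The two specialisations are then direct asymptotic computations of $\sigma_t^2 = e^{-2\lambda t}\int_0^t e^{2\lambda s}\varepsilon(s)\, ds$. In case (a), $\varepsilon(s)\le Ce^{-2\alpha s}$ with $\alpha<\lambda$ gives $\sigma_t^2\le Ce^{-2\lambda t}\int_0^t e^{2(\lambda-\alpha)s}ds=O(e^{-2\alpha t})$. In case (b), $\varepsilon(s)\le Cs^{-\alpha}$ and L'H\^opital applied to $\int_0^t e^{2\lambda s}s^{-\alpha}ds$ against $e^{2\lambda t}t^{-\alpha}/(2\lambda)$ shows that $\sigma_t^2\sim t^{-\alpha}/(2\lambda)=O(\varepsilon(t))$.

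The only delicate point is the passage from the LIL rate $\sqrt{\log\log\langle M\rangle_t}$ to the advertised rate $\sqrt{\log t}$; this is what forces me to use the a priori exponential upper bound on $\langle M\rangle_t$ provided by the monotonicity of $\varepsilon$. Everything else is a routine manipulation of the Gaussian variation-of-constants formula, and no probabilistic surprises arise beyond the DDS time-change and martingale convergence used in the two cases.
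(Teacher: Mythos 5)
Your argument is correct and follows the paper's proof essentially step for step: explicit variation-of-constants solution $X_t=e^{-\lambda t}(x+M_t)$ with $\langle M\rangle_t=\int_0^t e^{2\lambda s}\varepsilon(s)\,ds$, $L^2$-martingale convergence in Part~1, and Dambis--Dubins--Schwarz plus the law of the iterated logarithm together with the a~priori bound $\langle M\rangle_t\leqslant \frac{\varepsilon(0)}{2\lambda}(e^{2\lambda t}-1)$ to replace $\log\log\langle M\rangle_t$ by $O(\log t)$ in Part~2. The only cosmetic difference is in 2(b), where you compute the asymptotics of $\sigma_t^2$ by L'H\^opital for the power law, whereas the paper splits $\int_0^t$ at $t-\sqrt{t}$ and uses monotonicity of $\varepsilon$ to obtain $\sigma_t^2\leqslant \frac{\varepsilon(0)}{2\lambda}e^{-2\lambda\sqrt{t}}+\frac{1}{2\lambda}\varepsilon(t-\sqrt{t})$; both routes give the same bound, and both implicitly read $\varepsilon(t)=O(t^{-\alpha})$ as two-sided comparability when converting the resulting $O(t^{-\alpha})$ estimate into $O(\varepsilon(t))$.
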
   
\begin{proof}
We assume without loss of generality that $\varepsilon(0)<\infty$. The solution of Equation (\ref{linearise}) is 
\begin{equation}\label{solLin} X_{t}= \exp(-\lambda t)(x+\int_{0}^{t}\exp(\lambda s )\sqrt{\varepsilon (s)}dB_{s}=:\exp(-\lambda t)(x+M_{t}).
\end{equation}
The quadratic variation of $M_{t}$ is then
\begin{equation*} \langle M\rangle_{t} = \int_{0}^{t}\exp(2\lambda s)\varepsilon (s)ds.
\end{equation*}

For the first assertion of the proposition,  we then have that $t\mapsto M_{t}$ is bounded. The conclusion follows from (\ref{solLin}).

Concerning the second statement, we have, by the Dubins-Schwarz Theorem (see Theorem 4.6 in \cite{K-S}) with the law of Iterated Logarithm for Brownian motion (see Theorem 9.23, Chapter 2 in \cite{K-S}), 
\begin{equation}\label{estim0}\limsup_{t\rightarrow\infty}\frac{\vert M_{t}\vert }{\sqrt{2\langle M\rangle_{t} \log\log \langle M\rangle_{t}}}=1 
\end{equation}
almost surely. Thus $\frac{\vert M_{t}\vert }{\sqrt{2\langle M\rangle_{t} \log\log \langle M\rangle_{t}}}$ is bounded.\\
Since $\langle M\rangle_{t}\leqslant \frac{\varepsilon (0)}{2\lambda}(\exp(2\lambda t)-1)$, we have 
\begin{equation}\label{estim1} \log\log \langle M\rangle_{t} = O(\log(t)).
\end{equation}
Thus, 
\begin{eqnarray}\label{estim3}
\sqrt{2\langle M\rangle_{t} \log\log \langle M\rangle_{t}} \exp(-\lambda t)&=&\sqrt{\exp(-2\lambda t)\langle M\rangle_{t}}\sqrt{2\log\log \langle M\rangle_{t}}\notag\\
&= & O(\sigma_{t}\sqrt{\log(t)}).
\end{eqnarray}

Part (2.(a)) follows from the definition of $\sigma_{t}^2$. For part (2.(b)), we have
\begin{eqnarray*} \exp(-2\lambda t)\int_{0}^{t}\exp(2\lambda s)\varepsilon (s)ds &=& \exp(-2\lambda t)\int_{0}^{t-\sqrt{t}}\exp(2\lambda s)\varepsilon (s)ds\\
& & +\exp(-2\lambda t)\int_{t-\sqrt{t}}^{t}\exp(2\lambda s)\varepsilon (s)ds\\
&\leqslant & \frac{\varepsilon (0)}{2\lambda}\exp(-2\lambda t)(\exp(2\lambda t-2\lambda \sqrt{t})-1)\\
& & +\frac{1}{2\lambda}\exp(-2\lambda t)\varepsilon (t-\sqrt{t})\exp(2\lambda t)\\
& & -\frac{1}{2\lambda}\exp(-2\lambda t)\varepsilon (t-\sqrt{t})\exp(2\lambda t-2\lambda \sqrt{t}).
\end{eqnarray*}
Thus 
\begin{eqnarray*} \exp(-2\lambda t)\langle M\rangle_{t} &\leqslant& \frac{\varepsilon (0)}{2\lambda}(\exp(-2\lambda \sqrt{t})-\exp(-2\lambda t))\\
& & +\frac{1}{2\lambda}\varepsilon (t-\sqrt{t})(1-\exp(-2\lambda \sqrt{t})).
\end{eqnarray*}
Because for all $\beta \geqslant 0$, $(t-\sqrt{t})^\beta$ is equivalent to $t^\beta$ when $t\rightarrow \infty$, we obtain the existence of a constant $C$ such that 
\begin{equation}\label{estim2} \exp(-2\lambda t)\langle M\rangle_{t}\leqslant C^{2} \varepsilon (t)
\end{equation}
for $t$ large enough. \\
\end{proof}
\begin{remark} To ensure almost sure convergence to $0$, the maximal noise intensity has to be $\varepsilon(t)=O(1/\log(t)^{\alpha}),$ with $\alpha >1$. 
\end{remark}
A direct consequence of Proposition \ref{linearise0}.$2.ii$ is
\begin{corollary} If $\varepsilon (t)=(1+t)^{-\alpha}$ with $\alpha>0$, then the solution $(X_t)_{t\geqslant 0}$ of (\ref{linearise}) satisfies $\vert X_{t}\vert =O(t^{-\frac{\alpha}{2}}\sqrt{\log(t)})$. 
\end{corollary}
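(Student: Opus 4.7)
The corollary is a direct specialization of Proposition \ref{linearise0}, so the proof plan is essentially to check the hypotheses and combine the relevant parts.

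First, I would verify that the second case of the proposition applies. With $\varepsilon(s) = (1+s)^{-\alpha}$, the integral
\[
\int_0^t e^{2\lambda s}(1+s)^{-\alpha}\, ds
\]
diverges as $t \to \infty$, since $e^{2\lambda s}$ grows exponentially while $(1+s)^{-\alpha}$ decays only polynomially. Hence we are in case (2) of Proposition \ref{linearise0}, which gives
\[
|X_t| = O(\sigma_t \sqrt{\log(t)}).
\]

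Next, since $\varepsilon(t) = (1+t)^{-\alpha} = O(t^{-\alpha})$, subcase (2.b) of the proposition applies and yields $\sigma_t^2 = O(\varepsilon(t)) = O((1+t)^{-\alpha})$, hence $\sigma_t = O(t^{-\alpha/2})$. Substituting this into the previous bound gives
\[
|X_t| = O(t^{-\alpha/2}\sqrt{\log(t)}),
\]
which is the claimed rate. There is no real obstacle: the work was already done inside Proposition \ref{linearise0}, where the bound $\sigma_t^2 \leqslant C^2\varepsilon(t)$ for polynomially decaying $\varepsilon$ is established via the splitting of $\int_0^t e^{2\lambda s}\varepsilon(s)\,ds$ at $t - \sqrt{t}$. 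The corollary is simply the concrete instance $\varepsilon(t) = (1+t)^{-\alpha}$ written out.
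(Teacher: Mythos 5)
Your proposal is correct and matches the paper's (implicit) argument: the paper simply declares the corollary ``a direct consequence of Proposition \ref{linearise0}.2.(b),'' and you spell out exactly that instantiation — divergence of $\int_0^t e^{2\lambda s}\varepsilon(s)\,ds$ puts you in case (2), and subcase (2.b) gives $\sigma_t^2 = O(\varepsilon(t))$, yielding the stated rate.
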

 \section*{Acknoledgement}
I thank my PhD advisor Michel Bena\"im for the very useful related discussions and comments on previous versions of this work, Ioana Ciotir for her proofreading of the first version and Olivier Raimond for stimulating discussion and his suggestion to consider the case of the sphere in all dimensions. Finally, I thank the anonymous referee for all the comments made on a previous version.

\end{document}